\documentclass{article}
\usepackage{authblk} %

\usepackage{amsmath,amssymb}
\usepackage{graphicx}
\usepackage{hyperref}

\usepackage{amsmath, amssymb, amsfonts}
\usepackage{mathtools}
\usepackage{tikz}
\usetikzlibrary{shapes.geometric, arrows.meta, positioning}
\usepackage[applemac]{inputenc}
\usepackage{bm}

\usepackage{booktabs}   

\renewcommand{\arraystretch}{0.82}

\usepackage{etoolbox}
\BeforeBeginEnvironment{tabular}{\footnotesize}
\AfterEndEnvironment{tabular}{\normalsize}

\newcommand{\sbm}[1]{\mbox{\scriptsize\boldmath $#1$}}
\newcommand{\tras}{^{\mbox{\scriptsize tr}}}
\newcommand{\E}{\mbox{\bf E}}
\newcommand{\var}{\mbox{\bf Var}}
\newcommand{\cov}{\mbox{\bf Cov}}
\newcommand{\N}{\mbox{\bf N}}
\newcommand{\scN}{\mbox{\scriptsize \bf N}}
\newcommand{\R}{\mbox{\bf R}}

\newcommand{\pr}{\mbox{\bf P}}
\newcommand{\un}{\mbox{\bf 1}}
\newcommand{\bec}{\begin{equation}}
\newcommand{\eec}{\end{equation}}

\newtheorem{theorem}{Theorem}
\newtheorem{definition}{Definition}

\title{Multivariate EDF tests for uniformity, normality, spherical and elliptical symetry, and independence based on a Brownian sheet deconstruction}

\author[1]{Alejandra Caba\~na }
\author[2]{Enrique M. Caba\~na}

\affil[1]{Universitat Aut\`onoma de Barcelona, Spain}
\affil[2]{PEDECIBA, Uruguay}
\date{}

\begin{document}
\maketitle

\begin{abstract}
This paper extends a recently proposed family of EDF-based goodness-of-fit procedures for the hypercube $[0,1]^p$ - the m-test and the s-test - which are based on a unique deconstruction of the $p$-parameter Brownian sheet into independent Gaussian processes. 

We use the fact that whenever a null hypothesis implies a joint distribution that factorizes into independent continuous components after a suitable mapping, the problem can be reduced to a uniformity test on the hypercube via componentwise probability integral transforms.
Specifically, we introduce and analyze new procedures derived from these principles for testing uniformity on the hypersphere $S^p$, as well as multivariate normality, spherical and elliptical symmetry, and independence in $R^p$. The methodology is based on the decomposition of finite signed measures into zero-marginal components to isolate coordinate interactions. Empirical power comparisons show that these extended procedures are highly competitive with existing methods in the statistical literature, demonstrating particular sensitivity to coordinate-based dependencies and joint dependency structures.
\end{abstract}

\noindent \textbf{Keywords:}
Multivariate EDF-based G-o-F tests, Brownian sheet, Brownian pillow, Uniformity, Normality, Symmetry, Independence,  copula alternatives.

\section{Introduction}

While basing statistical decisions on the distance between the sample EDF and the theoretical distribution under the null hypothesis is a conceptually robust approach, it remains significantly underused in multivariate settings. The primary barriers to this strategy lie in the fact that the null distributions of such statistics 
depend heavily on both the dimension $p$ and the sample size $n$, while their asymptotic forms are often mathematically complex and difficult to implement.
In a recent article \cite{cabana2025}, we addressed these challenges by taking profit of the fact that a $p$-parameter Brownian sheet can be uniquely decomposed into a sum of $2^p$ independent Gaussian processes. This 
suggests a new framework for constructing uniformity tests on the hypercube $[0,1]^p$ for $p\geq1$ based on the asymptotic decomposition of the EDF. These procedures, which we refer to as the \emph{m-test} and the \emph{s-test}, use statistics whose asymptotic distributions are well-defined. However, in practice, we use Monte Carlo simulations to approximate critical regions. This approach not only simplifies the computational burden but also significantly improves finite-sample performance compared to methods relying strictly on asymptotic approximations.
The methodology developed here comprises two complementary structural components.

The first is probabilistic and is based on the unique decomposition of finite signed measures on product spaces into components with zero marginals, indexed by subsets $H\subset J:=\{1,2,…,p\}$. 
When applied to the Wiener measure, this allows the Brownian sheet to be expressed as a sum of independent Gaussian terms related to independent Brownian pillow measures. This yields a family of statistics based on the squared $L^2$ norms of these measures (see equation \ref{bnH})
which converge, under the null hypothesis of uniformity, to mutually independent components and constitute the basis of the proposed m- and s-tests

The second component exploits the fact that, whenever the null hypothesis implies that the joint distribution can be mapped to a product of independent continuous marginals, the problem reduces to testing uniformity on  $[0,1]^p$ via componentwise probability integral transforms.

By applying the first component to provide the test statistics for the hypercube and the second to map complex multivariate hypotheses back to the hypercube, we extend the applicability of these tests to a much broader class of problems.

In this article, we show that straightforward applications of these principles lead to highly competitive procedures for testing uniformity on the hypersphere $S^p$, as well as multivariate normality, spherical and elliptical symmetry, and independence in $R^p$. Empirical power comparisons demonstrate that these tests are particularly effective at detecting non-standard deviations, such as coordinate-based interactions in copula models, where traditional distance-based and directional tests frequently lose sensitivity.

An R package \cite{MuniCandS} for computing the proposed tests is available.

\section{Decomposition of a signed measure on a product probability space as a sum of zero-marginals components}

Let $(\Omega,{\cal A},P)=\left(\prod_{j=1}^p\Omega_j,\bigotimes_{j=1}^p \mathcal{A}_j, \prod_{j=1}^pP_j\right)$ denote a product probability space, and $\mu$ a finite signed measure on $(\Omega,{\cal A})$. Recall that any measure on $(\Omega,{\cal A})$ is determined by the measures of the product sets $\prod_{j=1}^pA_j$ for $A_j\in {\cal A}_j, j=1,2,\dots,p$.

Given a subset $H$ of $J:=\{1,2,\dots,p\}$, we denote $\Omega_H=\prod_{j\in H}\Omega_j$ and $P_H=\prod_{j\in H}P_j$. A product of measurable sets $A=\prod_{j=1}^pA_j$ is said to be an $H$-set when $A_j=\Omega_j$ for $j\not\in H$.

A measure $M_H$ on $\Omega_H$ is said to be a zm-measure when all its marginals vanish, that is, when $M(\prod _{j\in H}A_j)=0$ whenever $A_j=\Omega_j$ for at least some $j\in H$.

The following decomposition holds:

\begin{theorem} Any finite signed measure $\mu$ on $\Omega$ can be written as the sum
\bec\label{ladesco}\mu\left(\prod_{j=1}^pA_j\right)=\sum_{H\subset J} P_{J\setminus H}\left(\prod_{j\in\{J\setminus H\} }A_j\right)  \times \mu_H \left(\prod_{j\in H}A_j\right)\eec
where each $\mu_H$ is a signed zm-measure on $\Omega_H$.

The decomposition is unique, and the map $\mu\mapsto (\mu_H)_{H\subset J}$ is linear and continuous in the supremum norm.
\end{theorem}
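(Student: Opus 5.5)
This is a Hoeffding/ANOVA-type decomposition, and I will prove it by Möbius inversion over the lattice of subsets of $J$.

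\emph{Construction.} For $K\subset J$ let $\nu_K$ be the marginal of $\mu$ on $\Omega_K$, that is, $\nu_K(\prod_{j\in K}A_j)=\mu(\prod_{j=1}^pA_j)$ with $A_j=\Omega_j$ for $j\notin K$. For $H\subset J$ I define
\[
\mu_H\Bigl(\prod_{j\in H}A_j\Bigr)=\sum_{K\subset H}(-1)^{|H\setminus K|}\,P_{H\setminus K}\Bigl(\prod_{j\in H\setminus K}A_j\Bigr)\,\nu_K\Bigl(\prod_{j\in K}A_j\Bigr).
\]
Equivalently, $\mu_H$ is obtained from $\nu_H$ by applying, in each coordinate $j\in H$, the operator $I-E_j$. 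Here $E_j$ replaces the $j$-th coordinate of a measure by $P_j$ times the total mass in that coordinate. Each summand is a product of a probability measure with a finite signed measure, so it extends to a finite signed measure on $\Omega_H$. Hence $\mu_H$ is a finite signed measure.

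\emph{Zero marginals.} Set $A_i=\Omega_i$ for some $i\in H$ and pair each $K\not\ni i$ with $K\cup\{i\}$. The two terms have opposite signs. They coincide because $P_i(\Omega_i)=1$ and $\nu_{K\cup\{i\}}(\cdots\times\Omega_i)=\nu_K(\cdots)$. So they cancel, and $\mu_H$ is a zm-measure. In operator language this is the identity $E_i(I-E_i)=0$.

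\emph{Decomposition.} Substitute the definition into the right side of (\ref{ladesco}) and regroup by $K$. For fixed $K$, the coefficient of $\nu_K(\prod_K A_j)\,P_{J\setminus K}(\prod_{J\setminus K}A_j)$ is $\sum_{H\supset K}(-1)^{|H\setminus K|}$. This sum equals $1$ if $K=J$ and $0$ otherwise. The right side therefore equals $\nu_J=\mu$ on product sets. Since measures are determined by their values on product sets, as recalled above, the two measures agree.

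\emph{Uniqueness.} Suppose $\sum_H P_{J\setminus H}\times\mu_H=\sum_H P_{J\setminus H}\times\mu'_H$ with all $\mu_H,\mu'_H$ zm-measures. Put $\delta_H=\mu_H-\mu'_H$, so $\sum_H P_{J\setminus H}\times\delta_H=0$. I argue by induction on $|H|$ that $\delta_H=0$. Take the marginal on $\Omega_L$, i.e.\ set $A_j=\Omega_j$ for $j\notin L$. A term with $H\not\subset L$ vanishes, since $\delta_H$ is zm and some coordinate of $H$ is full. The remaining identity is $\sum_{H\subset L}P_{L\setminus H}\times\delta_H=0$. The induction hypothesis kills every $H\subsetneq L$, leaving $\delta_L=0$. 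The base case $L=\emptyset$ gives $\delta_\emptyset=\mu(\Omega)-\mu'(\Omega)=0$.

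\emph{Linearity and continuity.} Linearity is evident from the formula. For continuity, $\nu_K$ has total variation at most $\|\mu\|$, and multiplying by a probability measure preserves this norm. Hence $\|\mu_H\|\le 2^{|H|}\|\mu\|$, in either the total variation or the supremum-over-sets norm.

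\emph{Main obstacle.} The delicate step is checking that the formula really defines a countably additive signed measure on all of $\bigotimes_{j\in H}\mathcal A_j$, not merely a set function on rectangles. This works because each term is a genuine product measure. A second subtlety is that uniqueness relies on equality on rectangles determining the measure, via the $\pi$–$\lambda$ theorem.
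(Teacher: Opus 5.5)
Your proof is correct, and your uniqueness argument (restrict to $L$-sets, let the zm property kill every term with $H\not\subset L$, induct on $|L|$) is essentially the paper's. For existence, however, you take a different route.

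The paper never writes $\mu_H$ in closed form. It starts from $\mu_0=\mu-\mu(\Omega)P$ and peels off components by increasing cardinality via $\mu_k=\mu_{k-1}-\sum_{\#H=k}P_{J\setminus H}\times\mu_{k-1}^{(H)}$, where $\mu_{k-1}^{(H)}$ is the marginal of $\mu_{k-1}$ on $\Omega_H$. It sets $\mu_K(A_K)=\mu_k(A_K)$ for $\#K=k+1$. It then proves by a finite induction that $\mu_k$ vanishes on all $k$-sets, which is what makes each $\mu_H$ a zm-measure. The same equations \eqref{descogen}--\eqref{recu} thus serve for both existence and uniqueness, and the construction is directly algorithmic.

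Your inclusion--exclusion formula $\mu_H=\sum_{K\subset H}(-1)^{|H\setminus K|}P_{H\setminus K}\times\nu_K$ is the closed-form solution of that recursion (uniqueness forces the two to agree). It buys several things:
\begin{itemize}
\item the zm property becomes a one-line pairing cancellation rather than an induction;
\item countable additivity is automatic, since $\mu_H$ is a finite combination of product measures;
\item continuity becomes the explicit bound $\|\mu_H\|\le 2^{|H|}\|\mu\|$, whereas the paper only asserts that it follows plainly from the construction.
\end{itemize}
It also shows that the decomposition is just the expansion of $\prod_{j\in J}\bigl(E_j+(I-E_j)\bigr)$ applied to $\mu$. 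That is the general form of the product-of-binomials identity \eqref{prodbin}, which the paper invokes only case by case in Section 3 for $W$ and $\delta_{\bm X}$.
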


A version of this result formulated in terms of the distribution function -  specifically for \(\Omega_j = [0,1]\) and \(P_{j}\) being the Lebesgue measure (\(j=1,\dots,p\)) - is contained in \cite{cabana2025}. We include a proof of the current statement in an appendix (\S\ref{appe}) for easy reference.

\section{Decomposing  the standard Wiener measure and the empirical process}

Let $W$ be the standard $p$-parameter Wiener measure
on $\Omega=\prod_{j=1}^p\Omega_j$, where $\Omega_j=[0,1]$ for $j=1,2,\dots,p$.
The following statement establishes that $W$ 
can be uniquely represented as a sum of independent Gaussian components, each associated with a specific subset of coordinates.

\begin{theorem}
The standard $p$-parametric Gaussian Wiener measure $W$ on $[0,1]^p$ can be decomposed into a linear combination of independent Brownian pillows $b_H$ associated with each subset of parameters $H \subseteq J := \{1, \dots, p\}$:
\begin{equation}\label{asum}
W\left(\prod_{j=1}^p A_j\right) = \sum_{H\subset J} \left( \prod_{j \in J\setminus H} \lambda(A_j) \right) \times b_H\left(\prod_{j\in H} A_j\right)
\end{equation}
where $\lambda$ denotes the Lebesgue measure and each $b_H$ is a centred Gaussian zm-measure with covariance \[\E b_H\left(\prod_{j\in H} A_j\right)b_H\left(\prod_{j\in H} B_j\right)=\prod_{j\in H} \left( \lambda(A_j\cap B_j) - \lambda(A_j)\lambda(B_j) \right).\]

\end{theorem}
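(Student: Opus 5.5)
The plan is to apply Theorem 1 to $W$ pathwise, or more precisely through its linear structure. We use the explicit inclusion--exclusion form of the zm-components that comes out of the proof of Theorem 1. For a signed measure $\mu$ and $H\subset J$, one expects
\[
\mu_H\Bigl(\prod_{j\in H}A_j\Bigr)=\sum_{K\subset H}(-1)^{|H\setminus K|}\Bigl(\prod_{j\in H\setminus K}\lambda(A_j)\Bigr)\,\mu\Bigl(\prod_{j\in K}A_j\times\prod_{j\notin K}\Omega_j\Bigr).
\]
This is the M\"obius inversion of (\ref{ladesco}). Equivalently, $\mu_H$ is obtained by applying to $\mu$, for each $j\in H$, the centring operator $A_j\mapsto (A_j)-\lambda(A_j)\Omega_j$, and for each $j\notin H$ the integration over $\Omega_j$.

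Since $W$ is not a bona fide signed measure, I will define $b_H:=W_H$ by the displayed formula, applied to products of intervals or Borel sets. Then $b_H$ is an $L^2$-valued finitely additive set function, and (\ref{asum}) follows from the same algebraic identity that yields (\ref{ladesco}). That identity uses only finite additivity and linearity, so it transfers verbatim.

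Each $b_H$ is a linear functional of the Gaussian measure $W$, so the family $(b_H)_{H\subset J}$ is jointly centred Gaussian. Independence therefore reduces to computing covariances. The key device is the representation $W(A)=\int \un_A\,dW$ with $\E\, W(A)W(B)=\lambda(A\cap B)$. In this representation,
\[
b_H\Bigl(\prod_{j\in H}A_j\Bigr)=\int \prod_{j\in H}\bigl(\un_{A_j}(t_j)-\lambda(A_j)\bigr)\prod_{j\notin H}1\;dW(t).
\]
Call the integrand $f_{H,A}$. The covariance is then $\int f_{H,A}f_{H',B}\,d\lambda^p$, and this integral factorizes over coordinates.

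For coordinate $j$ the factor is one of four things:
\begin{itemize}
\item $\int(\un_{A_j}-\lambda(A_j))(\un_{B_j}-\lambda(B_j))=\lambda(A_j\cap B_j)-\lambda(A_j)\lambda(B_j)$ if $j\in H\cap H'$;
\item $\int(\un_{A_j}-\lambda(A_j))\cdot 1=0$ if $j\in H\triangle H'$;
\item $1$ if $j\notin H\cup H'$.
\end{itemize}
So the covariance vanishes when $H\neq H'$, which gives independence by Gaussianity. When $H=H'$ it equals the stated Brownian pillow covariance $\prod_{j\in H}(\lambda(A_j\cap B_j)-\lambda(A_j)\lambda(B_j))$. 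The zm-property follows because $f_{H,A}=0$ whenever some $A_j=\Omega_j$ with $j\in H$. Uniqueness of the decomposition is inherited from the uniqueness part of Theorem 1, which is again purely algebraic.

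The main obstacle is rigor in passing from signed measures to $W$: $W$ has unbounded variation, so Theorem 1 cannot literally be applied sample-path-wise. I would handle this by working throughout with the isonormal process $f\mapsto\int f\,dW$ on $L^2([0,1]^p)$ and defining all objects via $L^2$ functions as above. The expansion of the function $\prod_j\un_{A_j}=\prod_j\bigl[(\un_{A_j}-\lambda(A_j))+\lambda(A_j)\bigr]$ then gives (\ref{asum}) directly, by linearity of the stochastic integral. This is the orthogonal (ANOVA) decomposition of $L^2([0,1]^p)$, and it makes the whole proof essentially a one-line product expansion. It remains only to note that $b_H$ admits a continuous version as a Brownian pillow on $[0,1]^H$, which follows from the standard results for the Brownian sheet.
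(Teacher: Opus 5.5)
Your proof is correct, but it takes a different route from the paper's. The paper works entirely at the level of the covariance kernel. It applies the product-of-binomials identity (\ref{prodbin}) with $\alpha_j=\lambda(A_j\cap B_j)/(\lambda(A_j)\lambda(B_j))-1$ to split $\prod_j\lambda(A_j\cap B_j)$ into a sum over $H$ of product kernels, and reads the $H$-th term as the covariance of $\prod_{j\notin H}\lambda(A_j)\,b_H$. It then infers that there are no cross-covariances because the kernel has no cross terms, gets the zm-property from the vanishing variance when some $A_j=\Omega_j$, and invokes Theorem 1 to identify the result with the canonical decomposition.

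You apply the same binomial expansion one level down, to the integrand $\prod_j\un_{A_j}(t_j)=\prod_j\bigl[(\un_{A_j}(t_j)-\lambda(A_j))+\lambda(A_j)\bigr]$. You define $b_H$ explicitly as the Wiener integral of the centred product, which agrees with your M\"obius-inversion formula, and compute cross-covariances directly through the isometry.

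What this buys:
\begin{itemize}
\item The decomposition (\ref{asum}) becomes an exact $L^2$ (hence almost sure) identity for the canonical components of $W$ itself, not an equality in law.
\item Orthogonality for $H\neq H'$ is actually proved, via the factor $\int_0^1(\un_{A_j}-\lambda(A_j))\,dt=0$ for $j\in H\triangle H'$. A sum decomposition of the kernel alone only shows that a sum of independent pillows has the law of $W$. The paper therefore implicitly needs the uniqueness part of Theorem 1, and measurability of the decomposition map, to transfer independence to the components of $W$.
\item You avoid dividing by $\lambda(A_j)\lambda(B_j)$, which may vanish.
\end{itemize}
The paper's version is shorter and shows the product structure of the pillow covariances at a glance.

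Two cosmetic points. You announce four cases but list three; the case $j\in H\triangle H'$ covers both $H\setminus H'$ and $H'\setminus H$. Also, the statement requires neither uniqueness nor a continuous version.
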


\paragraph{Proof}
Recall that the covariance of the standard Wiener measure $W$ on $[0,1]^p$ is given by the product of the marginal covariances:
\[ \E \left[ W\left(\prod_{j=1}^p A_j\right) W\left(\prod_{j=1}^p B_j\right) \right] = \prod_{j=1}^p \lambda(A_j \cap B_j). \]
To reveal the underlying additive structure, we use the algebraic identity for the product of binomials \begin{equation}\label{prodbin}\prod_{j=1}^p(1+\alpha_j) = \sum_{H\subset J} \prod_{j\in H} \alpha_j.\end{equation} By setting $\alpha_j = \frac{\lambda(A_j\cap B_j)}{\lambda(A_j)\lambda(B_j)} - 1$,  the covariance of $W$ can be written as:
\begin{equation} \label{cov_expand}
\sum_{H\subset J} \left( \prod_{j\in J\setminus H} \lambda(A_j)\lambda(B_j) \right) \times \prod_{j\in H} \bigl( \lambda(A_j\cap B_j) - \lambda(A_j)\lambda(B_j) \bigr).
\end{equation}
The $H$-th term in this sum is the covariance of the product
\[\prod_{j \in J\setminus H}\lambda(A_j)\times b_H\left(\prod_{j\in H}A_j\right)\]
where $b_H$ denotes a measure on $\Omega_H=\prod_{j\in H}\Omega_j$, with covariances 
\begin{equation*} \E b_H\left(\prod_{j\in H}A_j)\right)b_H\left(\prod_{j\in H}B_j\right)=\prod_{j\in H}(\lambda(A_j\cap B_j)-\lambda(A_j)\lambda(B_j)).\end{equation*}

Because the joint covariance in \eqref{cov_expand} is a sum of these terms with no cross-covariance between different subsets $H$, the components of the decomposition are
 uncorrelated.
 
 The variance of $b_H$ vanishes whenever at least one $A_j$ is $\Omega_j$, so that $b_H$ is almost surely a zero-marginal measure, hence  (\ref{cov_expand}) is the canonical decomposition established in Theorem 1.
 
Moreover, since the original Wiener measure W is a centred Gaussian measure and the decomposition is linear, the resulting components $b_H$ are also Gaussian. 
 The $b_H$ (and their c.d.f.'s) are known as Brownian pillow measures (and processes) in the probability literature. They are the result of conditioning the Wiener sheet $W_H$ on $\Omega_H$ to be a zm-measure.\hfill\framebox{$ $}

\medskip

In order to describe distributions associated to $b_H$, we will obtain a representation of its c.d.f. $b_H((t_j)_{j\in H})=b_H(\prod_{j\in H}[0,t_j])$ that we denote by the same symbol.

\medskip

 The Brownian pillow $b_H$ reduces to a standard Brownian bridge for $\#H=1$. The well known Karhunen-Loeve expansion of the Brownian bridge in terms of the eigenfunctions
  of the covariance kernel
 $b(x)=\sum_{\nu\in\scN}\sqrt{\lambda_{\nu}}Z_{\nu}\psi_{\nu}(x)$, $ \psi_{\nu}(x)=\sqrt 2\sin(\nu\pi x)$, $\lambda_{\nu}=\frac{1}{\nu^2\pi^2}$, $\nu\in \N:=\{1,2,3,\dots\}$
 where $\{Z_{\nu}:\nu\in\bm N\}$ are  i.i.d. standard Gaussian variables (see for instance \cite{Durbin1973}) 
 is easily extended to the Brownian pillow due to the factorization of the covariance, thus yelding
 \begin{equation}\label{KL}b_H((t_j)_{j\in H})=\sum_{\sbm\nu\in\scN^{\#H}}Z_{\sbm\nu}\prod_{j\in H}\left(\sqrt 2\frac{\sin(\nu_j\pi t_j)}{\nu_j\pi}\right),\end{equation} 
 with $\{Z_{\sbm\nu}:\bm\nu\in\N^{\#H}\}$ i.i.d. standard Gaussian,  and this leads to an expansion of the squared norm of $b_H$:
 \begin{equation}\label{nortent}\|b_H\|^2=\sum_{\sbm\nu\in\scN^{\#H}}\frac{Z_{\sbm\nu}^2}{(\prod_{j\in H}\bm\nu_j\pi)^2}.\end{equation}
 This expansion allows to compute its expectation $\E \|b_H\|^2$ $=(\sum_{\nu\in\sbm N}\frac{1}{\nu^2\pi^2})^{\#H}$ $=\frac{1}{6^{\#H}}$
 and variance $\var \|b_H\|^2=2(\sum_{\nu\in\sbm N}\frac{1}{\nu^4\pi^4})^{\#H}$ $=\frac{2}{90^{\#H}}$  which show that for different cardinality of $H$ the squared norms of $b_H$ are of very different order of magnitude.
 
 \medskip

As for the unit measure $\delta_{\sbm X}(\prod_{j=1}^pA_j)=\prod_{j=1}^p\delta_{X_j}(A_j)$ concentrated at $\bm X=(X_1,\dots,X_p)\tras$, we set $\alpha_j=\left(\frac{\delta_{X_j}(A_j)}{\lambda(A_j)}-1\right)$ in (\ref{prodbin}) and get the canonical decomposition of Theorem 1
\[\prod_{j=1}^p\delta_{X_i}(A_j)=\sum_{H\subset J}\prod_{j\in J\setminus H}\lambda(A_j)\prod_{j\in H}\left(\delta_{X_j}(A_j)-\lambda(A_j)\right).\] 

 By the linearity, the measure associated to the uniform empirical process of an i.i.d. sample
${\cal X}_n=\{\bm X_1,\bm X_2,\dots,\bm X_n\}$ of vectors $\bm X_i=(X_{i,1},\dots,X_{i,p})\tras$ in $[0,1]^p$
has the decomposition \[W_n\left(\prod_{j=1}^pA_j\right)=\frac{1}{n}\sum_{i=1}^n\left(\prod_{j=1}^p\delta_{X_{i,j}}(A_j)-\prod_{j=1}^p\lambda(A_j)\right)\] \[=\frac{1}{n}\sum_{i=1}^n\sum_{H\subset J,H\not=\emptyset}\prod_{j\in J\setminus H}\lambda(A_j)\prod_{j\in H}\left(\delta_{X_j}(A_j)-\lambda(A_j)\right)\]
\begin{equation}\label{lala}=\sum_{H\subset J,H\not=\emptyset}\left(\prod_{j\in J\setminus H}\lambda(A_j)\right)\times b_{n,H}\left(\prod_{j\in H}A_j, \right)
\end{equation}
with \[b_{n,H}=\frac{1}{n}\sum_{i=1}^n\prod_{j\in H}\left(\delta_{X_j}(A_j)-\lambda(A_j)\right).\]

\section{Description of the m- test and the s-test}

\subsection{The test statistics}

The tests introduced in \cite{cabana2025} are similar to Cramér - von Mises test, but instead of relying on the quadratic distance between the EDF and the null distribution,  they are based on the distances between the zero marginals terms of their respective decompositions.

More precisely, our test statistic is not the $L^2$ norm of the empirical process
$W_n(\bm t)$.
 We use instead the quadratic norms \begin{equation}\label{bnH}\|b_{n,H}\|^2=\int_{\Omega_H} b_{n,H}(t_j)_{j\in H} \prod_{j\in H}dt_j\end{equation} of the processes $b_{n,H}$ appearing in (\ref{lala}).

The integrations required to compute these squared norms are specially simple because the integrands are factorized and the integrals commute with the product. The resulting expression is  
\[ \|b_{n,H}\|^2= \frac{1}{n} \sum_{h,i=1}^n \prod_{j \in H} \left( \frac{X_{h,j}^2 + X_{i,j}^2}{2} - X_{h,j} \vee X_{i,j} + \frac{1}{3} \right) .\]

If the distribution $F$ of the sample is the uniform distribution, the empirical process $W_n$ converges weakly to the pinned Brownian sheet
\[
W_1 \sim \sum_{H \subset J,\, H \neq \emptyset} W_H.
\]
Consequently, each $b_{n,H}$ converges in distribution to the corresponding Brownian pillow, and the vector of squared $L^2$ norms
\begin{equation}\label{defstat}
\left(\|b_{n,H}\|^2\right)_{H \subset J}
= \left(\int_{\Omega_H} b_{n,H}^2(\bm{t})\, d\bm{t}\right)_{H \subset J}
\end{equation}
converges to
$\left(\int_{\Omega_H} b_H^2(\bm{t})\, d\bm{t}\right)_{H \subset J}$, whose components are independent.

\medskip

If $F$ is not uniform, then $\|W_n\|^2 \to \infty$ almost surely, implying that at least one of the $2^p-1$ statistics $\|b_{n,H}\|^2$ diverges. This dichotomy ensures that any test rejecting the null hypothesis of uniformity when either the maximum or the sum of the $\|b_{n,H}\|^2$ is large is consistent.

We have already pointed out that for different cardinality of $H$ the corresponding $\|b_{n,H}\|^2$ are of different order of magnitud.  Therefore, to balance the weight given to deviations relative to the uniformity of each component in the decomposition of the empirical process, we aim to replace the squared norms with their p-values $p_{n,H}$ approximately equal to the asymptotic value $1-P_H(b_{n,H}).$

Although the distribution of $\|b_H\|^2$ under the null hypothesis of uniformity is known (see (\ref{nortent})), its representation as an infinite weighted sum of independent $\chi^2_1$ variables makes direct numerical evaluation impractical. For this reason, we approximate the finite-sample p-values using Monte Carlo simulation.  This approach avoids the numerical computation of infinite series and, importantly, yields estimates of the actual finite-sample p-values rather than the asymptotic ones.

Based on these ideas, we introduce two classes of tests:

\begin{itemize}
\item[\textbf{m-test}] 
Let ${\cal H}$ be the family of all nonempty subsets of $J$. For each $H\in{\cal H}$, let $P_{n,H}$ be the c.d.f. of $\|b_{n,H}\|^2$ under the null hypothesis (that actually depends on $H$ through its cardinal $\#H$) and $p_{n,H}=1-P_{n,H}(\|b_{n,H}\|^2)$ the p-value of the observed statistic $\|b_{n,H}\|^2$, so that the vector $\bm p_{n,{\cal H}}\in \bm {R} ^{\#{\cal H}}$ with components $(p_{n,H})_{H \in {\cal H}}$ is approximately uniform on $[0,1]^{\#\cal H}$ for uniform samples, since it has uniform components asymptotically independent.

By replacing each $p_{n,H}$ by a Monte Carlo approximation $\hat p_{n,H}$ based on a set of  $R$ replications of uniform samples in $\bm p_{n,{\cal H}}$, the resulting vector $\hat{\bm p}_{n,{\cal H}} \in \bm{R} ^{\#{\cal H}}$ with components $(\hat p_{n,H})_{H \in {\cal H}}$ is approximately uniformly distributed on $[0,1]^{\#{\cal H}}$ under the null hypothesis. Therefore, the test that rejects when
\[
\min_{H \in {\cal H}} \hat p_{n,H} < 1 - (1-\alpha)^{1/\#{\cal H}}
\]
has approximate level $\alpha$ and is consistent. If ${\cal H}$ is restricted to only a part of the subsets of $J$, for instance, the subsets with cardinality at most $h (<p)$, the resulting procedure is called a \emph{partial m-test}.

\item[\textbf{s-test}] 
Since under the null hypothesis the estimated p-values are approximately independent and uniformly distributed, the statistic
\[
\sum_{H \in {\cal H}} Q(1-\widehat p_{n,H}),
\]
where $Q$ denotes the quantile function of the $\chi^2_1$ distribution, is approximately $\chi^2_{\#{\cal H}}$ distributed. The test rejecting for large values of this statistic is called the \emph{s-test}. Partial versions are defined analogously.
\end{itemize}

Partial tests are no longer consistent, but they can be used to reduce the effects of the curse of dimension.

\subsection{Monte Carlo estimation of p-values}\label{impr}

The estimation of the p-value of each observed $\|b_{n,H}\|^2$ is made by simulating $R$ replications of independent uniform samples of size $n$ to compute the vector statistic 
$\hat{\bm p}_n= (\hat p_{n,H})_{H\in{\cal H}}$. Let 
$\|b_{n,H}^r\|^2$ denote the squared norm computed from the $r$th simulated sample.

The standard approximations
$\frac{\sum_{r=1}^R\sbm 1_{\{\|b_{n,H}^r\|^2>\|b_{n,H}\|^2\}}+1}{R+1},
$ are uniformly distributed on the finite set $\left\{\frac{1}{R+1},\frac{2}{R+1},\dots,\frac{R+1}{R+1}\right\}$ and asymptotically independent under the null hypothesis. Consequently, the null distribution of the m-test statistic is supported on the same grid, with
\[
\pr\left\{\min_{H\in{\cal H}}\hat p_{n,H}<\frac{i}{R+1}\right\}
=1-\left(\frac{R+2-i}{R+1}\right)^{\#{\cal H}},\qquad i=1,2,\dots,R+1.
\]
These $R+1$ values would be exactly the attainable significance levels of the m-test and since the distribution of the minimum  is concentrated near zero they form a very sparse set on the range (near 5\%) typically used in practice.

\medskip
 
 A new definition of $\hat {\bm p}_n$, with an absolutely continuous distribution function under the null hypothesis, makes it possible to build critical regions at any preassigned significance level. Rather than counting how many simulated values $b^r:=\|b_{n,H}^r\|^2$ exceed the observed $\|b_{n,H}\|^2$, we introduce an increasing bijection $\psi:\bm{R}^+\to[0,1)$ that maps each interval $[b^{(r)},b^{(r+1)}]$ between consecutive order statistics of $\{b^r:r=1,2,\dots,R\}$ (with the convention $b^{(0)}=0$ and $b^{(R+1)}=\infty$) onto $\bigl[r/(R+1),(r+1)/(R+1)\bigr]$, and we set
 \[
 \hat p_{n,H}=\psi(\|b_{n,H}\|^2).
 \]
 To make the behaviour of $\psi$ mimic the distribution of the squared norm, let $\Gamma_h$ denote the c.d.f. of a Gamma distribution with the same mean $1/6^h$ and variance $2/90^h$ as $\|b_H\|^2$ when $\#H=h$. This is the Gamma distribution with shape parameter $\alpha=5^h/2^{h+1}$ and scale parameter $\lambda=15^h/2$. For $r=0,1,2,\dots,R$ and $y\in[b^{(r)},b^{(r+1)}]$, define
 \[
 \psi(y)=\frac{1}{R+1}\left(r+\frac{\Gamma_h(y)-\Gamma_h(b^{(r)})}{\Gamma_h(b^{(r+1)})-\Gamma_h(b^{(r)})}\right).
 \]
 
The function $\psi$ is almost surely well defined, since ties among the simulated squared norms occur with probability zero. In practice, however, rounding may produce occasional ties: for some $r<s$,
\[
b^{(r-1)}<b^{(r)}=y=b^{(s)}<b^{(s+1)}.
\]
In that case, the expression above is undefined; we replace $\psi(y)$ by a uniform random variable on $\bigl[r/(R+1),s/(R+1)\bigr]$, taken independent of all other randomness in the procedure (including other tie events).

 \medskip

Up to this point, each component of $\hat {\bm p}_n$ has an (approximately) Uniform distribution on [0,1] for any sample size, but independence is only guaranteed asymptotically. In small samples, the residual dependence may cause the null distribution of the m- and s-test p-values to deviate from uniformity. A natural fix is a second Monte Carlo calibration step, comparing the observed p-value with p-values computed from additional samples generated under the null.

The R-package MuniCandS \cite{MuniCandS} implements these calculations, and saves the results of both simulations to be used when the same tests are applied to more than one sample.

\section{New tests derived from the m- and s-tests of uniformity on $[0,1]^p$}

\subsection{Reduction to uniformity on the hypercube}

All tests introduced below rely on the same principle:
whenever the null hypothesis implies that, after a suitable transformation,
the joint distribution factorizes into independent continuous components,
the problem can be reduced to testing uniformity on the hypercube.

Let $Z$ be a random element and suppose that under $H_0$
there exists a transformation
\[
T(Z)=(T_1(Z),\dots,T_p(Z))
\]
such that the components are independent with continuous c.d.f.'s
$F_1,\dots,F_p$.
Then
\[
U_j = F_j(T_j(Z)), \qquad j=1,\dots,p,
\]
satisfy
\[
\bm U=(U_1,\dots,U_p) \sim \mathrm{Uniform}([0,1]^p).
\]

Therefore testing $H_0$ reduces to testing uniformity of $U$ on $[0,1]^p$.

Depending on the structure of the marginal distributions $F_1,\dots,F_p$,
five situations naturally arise.

\medskip
\noindent
\textit{(i) Fully specified product structure.}

Under $H_0$, the components are independent with fully known continuous
marginal distributions. The reduction to uniformity on $[0,1]^p$ is exact.

\medskip
\noindent
\textit{(ii) Product structure with estimated parameters.}

Under $H_0$, the transformation to independent components depends on unknown parameters. After parameter consistent estimation,
the reduction to uniformity holds only asymptotically.

\medskip
\noindent
\textit{(iii) Mixed known and unknown marginals.}

Under $H_0$, the joint distribution factorizes, but some marginals are
known while others are unknown. Known marginals are transformed by their
exact c.d.f.'s, whereas unknown marginals are replaced by empirical
distribution functions.

\medskip
\noindent
\textit{(iv) Mixed known and unknown marginals with parameter estimation.}

Under $H_0$, the factorization with both known and unknown marginals results after a transformation dependent on estimated parameters.

\medskip
\noindent
\textit{(v) Completely unspecified marginals.}

Under $H_0$, the hypothesis is independence with continuous but otherwise
unspecified marginals. All components are transformed via their empirical
distribution functions, yielding pseudo-observations in $[0,1]^p$.

\medskip

The following examples, one of each type, constitute tests of hypotheses competitive with those offered in statistical literature for the same purposes.

\subsubsection{A uniformity-preserving map $M_p: S^{p}\to [0,1]^p$ and a case (i) example: Uniformity test on $S^p$}\label{paraM}

The system of polar coordinates defines a bijective mapping 
\[
M_{1,p}: C_{\pi,p}=[0,\pi]^{p-1}\times[0,2\pi] \longrightarrow S^p\subset\bm{R} ^{p+1},
\]
which maps a point $\bm\phi=(\phi_1,\dots,\phi_p)$ onto
\[
M_{1,p}(\bm\phi)
=
\left(
\cos(\phi_j)\prod_{k=1}^{j-1}\sin(\phi_k)
\right)_{j=1,\dots,p+1},
\]
where $\phi_{p+1}=0$ and the empty product $\prod_{k=1}^{0}$ is defined as 1.

For a product set $A=\prod_{j=1}^p A_j \subset C_{\pi,p}$, the $p$-dimensional volume of its image $M_{1,p}(A)$ equals
\[
\int_A \prod_{j=1}^p \sin^{p-j}(\phi_j)\, d\phi_j
=
\prod_{j=1}^p \int_{A_j} \sin^{p-j}(\phi_j)\, d\phi_j.
\]
In particular, the surface measure of $S^p$ is $\prod_{j=1}^p I_j$, where
\[
I_j=\int_0^{\pi}\sin^{p-j}(\phi)\, d\phi,
\quad j=1,\dots,p-1,
\qquad\text{and}\qquad
I_p=2\pi.
\]

Let $Y=(Y_1,\dots,Y_p)$ be a random vector in $C_{\pi,p}$ with independent coordinates having densities
\[
f_{Y_j}(\phi_j)=\frac{\sin^{p-j}(\phi_j)}{I_j}.
\]
Then $M_{1,p}(Y)$ is uniformly distributed on $S^p$, since for every measurable $A\subset C_{\pi,p}$,
\[
\bm{P}\{M_{1,p}(Y)\in M_{1,p}(A)\}
=
\bm{P}\{Y\in A\}
=
\frac{\text{$p$-volume}(M_{1,p}(A))}{\text{$p$-volume}(S^p)}.
\]

Let $F_{Y_j}$ denote the c.d.f. of $Y_j$, $j=1,\dots,p$. The mapping
\[
M_{2,p}: C_{\pi,p}\to [0,1]^p,
\qquad
M_{2,p}(\bm\phi)=(F_{Y_1}(\phi_1),\dots,F_{Y_p}(\phi_p)),
\]
transforms $Y$ into a random vector $U=M_{2,p}(Y)$ uniformly distributed on $[0,1]^p$.

Consequently, an $S^p$-valued random variable $Z$ is uniformly distributed on $S^p$ if and only if
\begin{equation}\label{Mp}
U=M_p(Z),
\qquad
M_p:=M_{2,p}\circ M_{1,p}^{-1},
\end{equation}
is uniformly distributed on $[0,1]^p$. Therefore, testing the null hypothesis that 
${\cal Z}=\{Z_i\}_{i=1}^n$ is a sample from the uniform distribution on $S^p$
is equivalent to testing whether 
${\cal U}=\{U_i=M_p(Z_i)\}_{i=1}^n$
is a sample from the uniform distribution on $[0,1]^p$.

A fast \textsf{R} script implementing $M_p$ is combined with the m- and s-tests to obtain the corresponding uniformity test on $S^p$.

\subsubsection{A case (ii) example: Test of normality}

Let ${\cal Z}_n=(Z_1,\dots,Z_n)$ be a random sample from a distribution $F$ on $\bm{R} ^p$. 
Denote by $\Phi$ the standard univariate normal c.d.f., and for 
$x=(x_1,\dots,x_p)\in\bm{R} ^p$ define
\begin{equation}\label{Phimap}
\Phi_p(x) := (\Phi(x_1),\dots,\Phi(x_p)).
\end{equation}

It is well known that $F=\mathcal{N}_p(\mu,\Sigma)$ if and only if
\[
\Phi_p\!\left(\Sigma^{-1/2}(Z_i-\mu)\right),
\quad i=1,\dots,n,
\]
are i.i.d. uniformly distributed on $[0,1]^p$.

Indeed, if $Z_i\sim\mathcal{N}_p(\mu,\Sigma)$, then
\[
Y_i=\Sigma^{-1/2}(Z_i-\mu)
\]
are i.i.d. $\mathcal{N}_p(0,I_p)$ with independent standard normal components, and applying $\Phi$ componentwise transforms them into independent $\text{Uniform}(0,1)$ variables.

Therefore, $F$ is multivariate normal if and only if the transformed sample
\[
X_i=\Phi_p\!\left(\widehat\Sigma_n^{-1/2}(Z_i-\widehat\mu)\right),
\quad i=1,\dots,n,
\]
with
\[
\widehat\mu=\frac{1}{n}\sum_{i=1}^n Z_i,
\qquad
\widehat\Sigma_n=\frac{1}{n}\sum_{i=1}^n (Z_i-\widehat\mu)(Z_i-\widehat\mu)^\top,
\]
is asymptotically uniform on $[0,1]^p$.

Consequently, for sufficiently large $n$, an approximate test of normality 
for ${\cal Z}_n$ is obtained by applying the uniformity test on $[0,1]^p$ 
to the transformed sample ${\cal X}_n=\{X_i\}_{i=1}^n$.

The proposed m- and s-tests of normality consist of applying the uniformity procedures in \cite{cabana2025} to ${\cal X}_n$. 
The Monte Carlo approximation of the $p$-values of the statistics $\|b_{n,H}\|^2$ is performed by generating samples from $\mathcal{N}_p(\widehat\mu,\widehat\Sigma_n)$.

\subsubsection{A case (iii) example: Isotropy test}\label{isotest}

A probability distribution on $\bm{R} ^p$ is said to be isotropic or spherically symmetric
if it admits the representation
\[
Z = R \Theta,
\]
where $R=\|Z\|\ge 0$ is a nonnegative radial random variable,
$\Theta=Z/\|Z\|\in S^{p-1}$,
$R$ and $\Theta$ are independent,
and $\Theta$ is uniformly distributed on $S^{p-1}$.

Equivalently, there exists a radial c.d.f. $F$ such that for every 
measurable $A\subset S^{p-1}$ and every $0<a<b$,
\[
\mathbb{P}\{ a < \|Z\| \le b,\ Z/\|Z\| \in A \}
=
(F(b)-F(a))
\frac{\mathrm{area}(A)}{\mathrm{area}(S^{p-1})}.
\]

We test the null hypothesis that the independent sample
${\cal Z}=\{Z_1,\dots,Z_n\}$ is drawn from an isotropic distribution 
on $\bm{R} ^p$ with continuous radial c.d.f. $F$.

Under this hypothesis:

1. The variables $\{F(\|Z_i\|)\}_{i=1}^n$ are i.i.d. $\mathrm{Uniform}(0,1)$;
2. The directions $\{Z_i/\|Z_i\|\}_{i=1}^n$ are i.i.d. uniform on $S^{p-1}$;
3. The radial and directional components are mutually independent.

Since $F$ is unknown, we approximate $F(\|Z_i\|)$ by the empirical transform
\[
U_i=\frac{\operatorname{rank}(\|Z_i\|)}{n+1},
\]
which is uniformly distributed on 
$\{i/(n+1): i=1,\dots,n\}$ under continuity of $F$.

An approximate isotropy test is therefore obtained by applying 
the uniformity test on $[0,1]^p$ to the sample with components
\[
\left(
M_{p-1}\!\left(\frac{Z_i}{\|Z_i\|}\right),
\frac{\operatorname{rank}(\|Z_i\|)}{n+1}
\right),
\quad i=1,\dots,n.
\]

Monte Carlo simulations used to estimate the $p$-values of the 
statistics $\|b_{n,H}\|$ are performed by generating $n\times p$ 
matrices $z$ with i.i.d. uniform entries on $[0,1]$, and replacing 
the last column $z_{\cdot,p}$ by
\[
\frac{\operatorname{rank}(z_{\cdot,p})}{n+1}.
\]

A case (iv) example is obtained if the null hypothesis is that the sample is isotropic around an unknown centre. In that case the centre is estimated by the sample means, and the isotropy test is applied to the new sample obtained by subtracting the estimated centre from the original one.

\subsubsection{A case (iv) example: Test of elliptical symmetry}

An $R^p$-valued random variable $Z$ is distributed with elliptical symmetry if and only if there is a $p\times p$ nonsingular matrix $A$ such that $AZ$ is isotropic.

Let us assume that $Z$ has finite second-order moments and $\Sigma=\var Z$ is nonsingular. Then $A\Sigma A\tras\bm = I_p$ and the elliptical symmetry of $Z$ holds if and only if $\Sigma^{-1/2}Z$ is isotropic, where $\Sigma^{-1/2} $ denotes the inverse of the positive definite square root of $\Sigma$.
Therefore, an approximate test of the null hypothesis that ${\cal Z}_n$ $=(Z_1,Z_2,\dots Z_n)$ is a sample of second-order random variables with a continuous elliptically symmetric distribution is performed by estimating $\Sigma$ by means of $\hat\Sigma=ZZ\tras/n$,  and applying the isotropy test to the sample $(\hat\Sigma^{-1/2}Z_i)_{i=1,2,\dots,n}$.

As in \S\ref{isotest}, a test of elliptical symmetry around an unknown centre is performed by subtracting the estimated centre to each element of the sample as a previous step.

\subsubsection{A case (v) example:  Test of independence}

If the sample ${\cal Z}=\{Z_1,\dots,Z_n\}$, $Z_i=(Z_{i,j})_{j=1,2,\dots,p}$ is drawn from a law with independent marginals, then the rows $U_i$ of the matrix with columns rank$(Z_{\cdot,j})/(n+1)$ are a sample of the uniform distribution on the discrete set $C_{n,p}:=\{1,2,\dots,n\}^p/(n+1)$ and hence, approximately uniform on $[0,1]^p$. Therefore, the m- and s-tests of uniformity on $[0,1]^p$ applied to the transformed sample test the independence of the components of ${\cal Z}$. The Monte Carlo simulations of the squared norms of the $b_{n,H}$ are conducted from uniform samples on $C_{n,p}$.

\medskip

\section {Empirical powers of some of the proposed tests compared with the powers of several competitors}

An empirical description of the performance of the m- and s-tests for uniformity on $[0,1]^p$ has been included in \cite{cabana2025} and we are not going over it again here. 

In addition of  the powers of the consistent m- and s-tests, we describe the performances of the partial tests that exclude the zero marginal components of the empirical process associated to one dimensional marginals. When these components are not affected by the alternatives, the partial tests are expected to be more powerful than the complete ones.

In all cases, we compare the powers of our tests with those of tests for the same null hypotheses implemented in R packages. The comparison is based on the performances for 3- and 6-dimensional samples of sizes 50 and 100.
 
\subsection{Empirical powers of the m- and s-tests of uniformity on $S^p$} 

Tables \ref{tUS50} and \ref{tUS100} describe the empirical powers of several tests of uniformity on $S^p$ for samples of sizes $n = 50$ and $n=100$  of random vectors in $R^3$ and $R^6$ and alternatives:

\begin{table}[htb]
\caption{Empirical powers of several uniformity tests including m- and
s-tests applied to samples of size $n=50$ in $S^3$ (left panel) and
$S^6$ (right panel). A dash (--) indicates that the alternative was
not considered for that dimension.}
\label{tUS50}
\resizebox{\textwidth}{!}{%
\renewcommand{\arraystretch}{0.85}
\begin{tabular}{l rrrrrrrr rrrrrrrr}
\toprule
& \multicolumn{8}{c}{$S^3$, $n=50$}
& \multicolumn{8}{c}{$S^6$, $n=50$} \\
\cmidrule(lr){2-9}\cmidrule(lr){10-17}
Test $\to$
 & Ray. & Bing. & Sob. & $G_n$ & $m$ & $s$ & $m_{h\ge2}$ & $s_{h\ge2}$
 & Ray. & Bing. & Sob. & $G_n$ & $m$ & $s$ & $m_{h\ge2}$ & $s_{h\ge2}$ \\
\midrule
\textit{Est.\ level}
 & 5.1 & 3.6 & 4.5 & 3.6 & 5.2 & 6.3 & 1.8 & 1.8
 & 4.5 & 3.6 & 4.5 & 3.9 & 4.5 & 6.3 & 1.8 & 1.8 \\
\midrule
$\kappa$&\multicolumn{16}{c}{\textit{vMF, mean direction $\mu_1=(1,0,\ldots,0)$, concentration $\kappa$}}\\
0.2 & 12.8 & 3.6 & 1.8 & 10.2 & 15.1 & 11.8 & 6.3 & 6.3
    & 12.9 & 3.6 & 1.8 &  9.9 & 14.5 & 11.8 & 7.2 & 7.2 \\
0.5 & 38.3 & 2.7 & 4.2 & 38.4 & 44.2 & 37.9 & 7.6 & 7.6
    & 38.7 & 2.7 & 4.5 & 39.1 & 43.3 & 38.8 & 8.1 & 8.1 \\
0.8 & 72.0 & 7.2 & 1.8 & 68.2 & 80.2 & 74.5 & 3.6 & 3.6
    & 72.3 & 7.2 & 1.8 & 68.8 & 80.2 & 73.9 & 4.5 & 4.5 \\
1.0 & 95.5 &12.8 & 2.7 & 95.6 & 97.3 & 95.5 & 8.1 & 8.1
    & 95.6 &12.5 & 2.8 & 96.3 & 97.3 & 95.5 & 9.0 & 9.0 \\
1.2 & 98.2 &16.2 & 5.6 & 98.2 & 99.1 & 98.2 &10.8 &10.8
    & 98.2 &16.2 & 6.1 & 98.2 & 99.1 & 98.2 &10.8 &10.8 \\
\midrule
$\kappa$&\multicolumn{16}{c}{\textit{Mixture of two vMF, directions $\mu_1$ and $\mu_2=(0,1,0,\ldots,0)$, concentration $\kappa$}}\\
0.5 & 20.6 & 4.8 & 6.0 & 19.5 & 16.7 & 16.8 & 7.1 & 7.1
    & 19.2 & 4.0 & 5.2 & 19.3 & 13.9 & 14.8 & 6.9 & 6.9 \\
1.0 & 62.7 & 6.5 & 5.8 & 61.9 & 43.0 & 47.2 & 9.5 & 9.5
    & 65.1 & 6.9 & 4.4 & 64.3 & 41.3 & 48.5 & 9.9 & 9.9 \\
1.5 & 92.4 &12.2 & 6.8 & 92.6 & 73.2 & 79.0 &10.0 &10.0
    & 91.0 &10.6 & 6.1 & 91.1 & 68.8 & 74.6 &10.8 &10.8 \\
2.0 & 99.0 &17.9 & 7.2 & 99.0 & 88.9 & 93.6 &12.9 &12.9
    & 99.2 &16.0 & 8.2 & 99.3 & 87.8 & 93.3 &10.9 &10.9 \\
\midrule
$\kappa$&\multicolumn{16}{c}{\textit{Projection $Z/\|Z\|$, $\mathrm{Var}(Z)=\mathrm{diag}(1+\kappa,1,\ldots,1)$}}\\
1.0 &  5.7 &40.0 & 5.3 & 10.6 & 12.4 & 14.1 &11.2 &11.2
    &  4.3 &37.8 & 5.8 &  8.1 & 13.2 & 28.0 &10.3 &26.3 \\
2.0 &  6.4 &78.1 & 5.6 & 25.4 & 25.4 & 28.6 &12.9 &12.9
    &  5.1 &84.9 & 7.2 & 17.2 & 23.2 & 48.8 &16.1 &41.6 \\
3.0 &  7.1 &94.1 & 6.1 & 48.9 & 45.5 & 45.7 &13.7 &13.7
    &  6.0 &98.1 & 8.4 & 34.7 & 35.2 & 66.3 &20.7 &56.1 \\
4.0 &  6.6 &98.7 & 5.9 & 65.7 & 62.3 & 59.8 &14.1 &14.1
    &  6.3 &99.8 & 8.9 & 54.5 & 49.9 & 76.8 &24.4 &62.8 \\
\midrule
$\kappa$&\multicolumn{16}{c}{\textit{Projection of $(Z_{p+1}+\kappa Z_1,(Z_i+\kappa Z_{i+1}))$, $Z_i$ i.i.d.\ $N(0,1)$}}\\
1.0 &  6.1 &95.3 & 4.6 & 49.9 & 66.8 & 65.8 &78.2 &78.2
    &  6.5 &100.0&10.7 & 71.9 & 95.4 & 97.4 &95.1 &97.8 \\
2.0 &  5.6 &79.6 & 5.3 & 26.5 & 43.1 & 42.6 &58.2 &58.2
    &  6.6 &99.8 & 7.5 & 30.1 & 69.4 & 72.7 &68.1 &74.5 \\
3.0 &  5.3 &54.7 & 4.9 & 12.3 & 24.0 & 23.7 &36.0 &36.0
    &  5.7 &84.7 & 5.9 & 14.0 & 36.0 & 37.3 &33.8 &37.3 \\
\midrule
$\rho$&\multicolumn{16}{c}{\textit{Projection, $\mathrm{Cov}(Z_i,Z_j)=\rho$, $i\ne j$; $S^6$ has extra $\rho=0.2$ row}}\\
0.2 & -- & -- & -- & -- & -- & -- & -- & --
    &  4.6 &55.9 & 6.4 &  9.8 & 30.4 & 34.3 &29.5 &38.8 \\
0.4 &  6.7 &82.2 & 3.5 & 28.4 & 41.5 & 47.1 &62.0 &62.0
    &  7.3 &98.9 &12.0 & 45.0 & 87.7 & 91.1 &87.3 &92.6 \\
0.6 &  6.3 &100.0& 4.5 & 80.2 & 84.7 & 88.3 &91.2 &91.2
    & 13.5 &100.0&24.3 & 97.1 & 99.7 & 99.9 &99.7 &99.9 \\
0.8 &  7.2 &100.0& 7.2 &100.0 &100.0 &100.0 &100.0&100.0
    & 34.8 &100.0&56.7 &100.0 &100.0 &100.0 &100.0&100.0 \\
\midrule
$\rho$&\multicolumn{16}{c}{\textit{$M_p^{-1}$ of normal copula, parameter $\rho$; $S^6$ has $\rho=0.4,0.6,0.8$ only}}\\
0.2 &  5.2 &14.9 & 6.2 &  7.6 & 21.0 & 28.6 &26.8 &36.3
    & -- & -- & -- & -- & -- & -- & -- & -- \\
0.4 &  6.7 &58.9 &11.5 & 15.8 & 77.3 & 84.5 &82.8 &90.1
    &  9.0 &100.0&10.8 & 61.7 & 72.6 & 85.6 &74.1 &83.8 \\
0.6 &  9.5 &97.9 &25.5 & 50.1 & 99.4 & 99.9 &99.7 &99.9
    & 11.5 &100.0&16.1 & 96.4 & 98.2 &100.0 &98.2&100.0 \\
0.8 & 21.4 &100.0&65.1 & 99.9 &100.0 &100.0 &100.0&100.0
    & 15.8 &100.0&25.2 &100.0 &100.0 &100.0 &100.0&100.0 \\
\midrule
$\theta$&\multicolumn{16}{c}{\textit{$M_p^{-1}$ of Clayton copula, parameter $\theta$; $S^6$ starts at $\theta=0.5$}}\\
0.5 & -- & -- & -- & -- & -- & -- & -- & --
    &  9.2 &91.4 &19.5 & 26.7 & 68.7 & 78.1 &70.3 &78.5 \\
1.0 &  7.1 &88.8 &25.6 & 31.6 & 93.8 & 96.3 &96.6 &98.5
    & 11.6 &99.9 &34.5 & 77.5 & 98.0 & 98.7 &98.6 &98.7 \\
1.5 &  9.6 &99.4 &40.2 & 67.6 & 99.4 &100.0 &99.7&100.0
    & 14.6 &100.0&42.4 & 98.9 &100.0 &100.0 &100.0&100.0 \\
2.0 & 15.0 &100.0&55.0 & 90.6 &100.0 &100.0 &100.0&100.0
    & 20.3 &100.0&52.1 &100.0 &100.0 &100.0 &100.0&100.0 \\
\midrule
$\theta$&\multicolumn{16}{c}{\textit{$M_p^{-1}$ of Gumbel copula, parameter $\theta$}}\\
1.25&  5.8 &39.3 & 9.8 & 11.6 & 53.8 & 65.6 &63.2 &74.2
    &  6.0 &87.4 &21.4 & 20.4 & 68.7 & 80.2 &70.2 &80.2 \\
1.5 &  7.4 &86.3 &21.7 & 31.1 & 92.2 & 97.5 &96.2 &99.1
    & 10.9 &100.0&34.2 & 77.7 & 96.9 & 99.1 &97.3 &99.1 \\
1.75&  9.2 &99.1 &35.9 & 63.4 & 99.4 &100.0 &99.7&100.0
    & 16.8 &100.0&44.4 & 99.6 & 99.9 & 99.9 &99.9 &99.8 \\
2.0 & 13.0 &100.0&51.9 & 86.4 &100.0 &100.0 &100.0&100.0
    & 24.9 &100.0&52.4 &100.0 &100.0 &100.0 &100.0&100.0 \\
\bottomrule
\end{tabular}}
\end{table}

\begin{table}[h]
\caption{Empirical powers of several uniformity tests including m- and
s-tests applied to samples of size $n=100$ in $S^3$ (left panel) and
$S^6$ (right panel).}
\label{tUS100}
\resizebox{\textwidth}{!}{%
\renewcommand{\arraystretch}{0.85}
\begin{tabular}{l rrrrrrrr rrrrrrrr}
\toprule
& \multicolumn{8}{c}{$S^3$, $n=100$}
& \multicolumn{8}{c}{$S^6$, $n=100$} \\
\cmidrule(lr){2-9}\cmidrule(lr){10-17}
Test $\to$
 & Ray. & Bing. & Sob. & $G_n$ & $m$ & $s$ & $m_{h\ge2}$ & $s_{h\ge2}$
 & Ray. & Bing. & Sob. & $G_n$ & $m$ & $s$ & $m_{h\ge2}$ & $s_{h\ge2}$ \\
\midrule
\textit{Est.\ level}
 & 1.7 & 6.3 & 3.1 & 2.1 & 3.6 & 3.6 & 6.3 & 6.3
 & 6.3 & 2.7 & 4.7 & 4.5 & 1.0 & 5.9 & 0.2 & 3.6 \\
\midrule
$\kappa$&\multicolumn{16}{c}{\textit{vMF, mean direction $\mu_1=(1,0,\ldots,0)$, concentration $\kappa$}}\\
0.2 & 17.6 & 3.6 & 6.3 & 21.4 & 22.5 & 22.5 & 3.6 & 3.6
    &  6.3 & 3.6 & 7.2 &  6.5 &  3.6 & 10.4 & 0.9 & 9.9 \\
0.5 & 65.5 & 1.8 & 2.7 & 57.6 & 65.8 & 65.8 & 5.4 & 5.4
    & 26.1 & 4.8 & 6.4 & 27.0 & 20.6 &  8.1 & 4.4 & 3.6 \\
0.8 & 94.6 & 9.0 & 6.3 & 92.2 & 99.1 & 97.3 & 7.2 & 7.2
    & 54.9 & 5.3 & 3.6 & 54.7 & 46.9 & 19.1 & 0.9 & 4.5 \\
1.0 &100.0 &12.6 & 5.4 &100.0 &100.0 &100.0 & 8.2 & 8.2
    & 86.5 & 4.5 & 6.3 & 84.7 & 82.0 & 55.6 & 4.5 & 9.9 \\
1.2 &100.0 &27.8 & 2.8 &100.0 &100.0 &100.0 & 9.1 & 9.1
    & 98.2 & 9.0 & 3.8 & 97.4 &100.0 & 78.3 & 4.7 &12.6 \\
\midrule
$\kappa$&\multicolumn{16}{c}{\textit{Mixture of two vMF, directions $\mu_1$ and $\mu_2$, concentration $\kappa$}}\\
0.5 & 32.0 & 4.4 & 5.7 & 33.1 & 21.3 & 25.9 & 8.3 & 8.3
    & 14.0 & 5.8 & 5.3 & 13.9 &  9.2 &  9.8 & 4.4 & 6.3 \\
1.0 & 92.0 & 6.8 & 4.2 & 90.9 & 68.1 & 76.2 & 9.3 & 9.3
    & 53.2 & 5.8 & 4.8 & 55.0 & 29.2 & 26.7 & 5.3 & 8.9 \\
1.5 & 99.7 &13.8 & 5.1 & 99.8 & 94.6 & 98.2 & 9.2 & 9.2
    & 88.8 & 7.8 & 5.0 & 89.3 & 72.2 & 61.3 & 8.6 &14.3 \\
2.0 &100.0 &34.7 & 8.9 &100.0 & 99.9 &100.0 &12.9 &12.9
    & 99.3 &19.7 & 6.1 & 99.3 & 94.1 & 88.8 & 8.7 &19.8 \\
\midrule
$\kappa$&\multicolumn{16}{c}{\textit{Projection $Z/\|Z\|$, $\mathrm{Var}(Z)=\mathrm{diag}(1+\kappa,1,\ldots,1)$}}\\
1.0 &  3.8 &66.8 & 4.8 & 20.2 & 14.0 & 19.0 & 6.4 & 6.4
    &  5.1 &71.6 & 6.0 & 12.3 & 13.9 & 26.7 &12.8 &22.5 \\
2.0 &  5.1 &98.0 & 5.7 & 63.5 & 53.2 & 53.9 & 9.4 & 9.4
    &  5.2 &99.8 & 8.3 & 46.0 & 51.5 & 63.0 &20.5 &45.9 \\
3.0 &  4.9 &100.0& 6.5 & 89.9 & 84.7 & 81.4 &10.8 &10.8
    &  5.9 &100.0& 9.1 & 83.0 & 84.8 & 83.3 &26.2 &60.6 \\
4.0 &  5.7 &100.0& 7.3 & 98.0 & 96.2 & 94.8 &12.8 &12.8
    &  6.7 &100.0&10.4 & 97.3 & 96.3 & 92.8 &31.1 &70.4 \\
\midrule
$\kappa$&\multicolumn{16}{c}{\textit{Projection of $(Z_{p+1}+\kappa Z_1,(Z_i+\kappa Z_{i+1}))$, $Z_i$ i.i.d.\ $N(0,1)$}}\\
1.0 &  5.2 &99.9 & 6.0 & 90.7 & 94.1 & 95.0 &97.4 &97.4
    &  7.4 &100.0&10.1 &100.0 &100.0 &100.0 &100.0&100.0 \\
2.0 &  5.1 &97.9 & 6.5 & 61.9 & 74.4 & 78.4 &87.0 &87.0
    &  6.3 &100.0& 8.7 & 81.8 & 97.5 & 97.9 &98.0 &98.7 \\
3.0 &  5.7 &83.1 & 6.8 & 31.8 & 42.5 & 46.2 &59.8 &59.8
    &  4.4 &99.6 & 6.4 & 31.0 & 69.1 & 66.5 &73.2 &72.7 \\
\midrule
$\rho$&\multicolumn{16}{c}{\textit{Projection, $\mathrm{Cov}(Z_i,Z_j)=\rho$, $i\ne j$}}\\
0.2 &  6.2 &28.5 & 7.2 &  9.5 & 45.4 & 59.7 &53.9 &65.2
    &  4.9 &89.4 & 7.6 & 16.8 & 63.2 & 74.7 &63.4 &76.0 \\
0.4 &  6.6 &92.6 &16.7 & 34.0 & 97.3 & 99.1 &98.8 &99.4
    &  6.2 &100.0&15.2 & 90.8 & 99.9 & 99.9 &99.9 &99.9 \\
0.6 & 13.3 &100.0&47.5 & 95.0 &100.0 &100.0 &100.0&100.0
    & 18.1 &100.0&32.9 &100.0 &100.0 &100.0 &100.0&100.0 \\
0.8 & 35.0 &100.0&95.1 &100.0 &100.0 &100.0 &100.0&100.0
    & 61.8 &100.0&88.5 &100.0 &100.0 &100.0 &100.0&100.0 \\
\midrule
$\rho$&\multicolumn{16}{c}{\textit{$M_p^{-1}$ of normal copula, parameter $\rho$}}\\
0.4 &  6.8 &96.4 & 8.1 & 65.8 & 72.1 & 80.2 &86.5 &86.5
    &  9.7 &100.0& 9.1 & 97.3 & 97.3 &100.0 &97.3&100.0 \\
0.6 &  8.1 &100.0& 7.2 & 99.1 &100.0 &100.0 &100.0&100.0
    & 10.8 &100.0&11.8 &100.0 &100.0 &100.0 &100.0&100.0 \\
0.8 &  8.1 &100.0& 7.2 &100.0 &100.0 &100.0 &100.0&100.0
    & 18.0 &100.0&21.2 &100.0 &100.0 &100.0 &100.0&100.0 \\
\midrule
$\theta$&\multicolumn{16}{c}{\textit{$M_p^{-1}$ of Clayton copula, parameter $\theta$}}\\
0.5 &  5.7 &71.4 &17.4 & 20.5 & 89.2 & 95.4 &94.0 &98.4
    &  9.4 &100.0&32.7 & 61.3 & 95.1 & 98.4 &96.0 &98.4 \\
1.0 & 10.4 &99.9 &46.3 & 77.3 & 99.9 &100.0 &100.0&100.0
    & 17.9 &100.0&60.1 & 99.9 &100.0 &100.0 &100.0&100.0 \\
1.5 & 13.8 &100.0&70.6 & 99.4 &100.0 &100.0 &100.0&100.0
    & 25.4 &100.0&72.1 &100.0 &100.0 &100.0 &100.0&100.0 \\
2.0 & 18.6 &100.0&88.5 &100.0 &100.0 &100.0 &100.0&100.0
    & 38.9 &100.0&81.8 &100.0 &100.0 &100.0 &100.0&100.0 \\
\midrule
$\theta$&\multicolumn{16}{c}{\textit{$M_p^{-1}$ of Gumbel copula, parameter $\theta$}}\\
1.25 &  6.6 &68.3 &15.5 & 16.1 & 86.0 & 94.3 &90.5 &97.1
    &  8.8 &100.0&40.7 & 54.1 & 96.9 & 99.5 &97.2 &99.5 \\
1.5 &  9.3 &99.9 &39.6 & 72.9 & 99.9 &100.0 &100.0&100.0
    & 16.4 &100.0&61.5 & 99.9 &100.0 &100.0 &100.0&100.0 \\
1.75 & 14.3 &100.0&65.0 & 99.4 &100.0 &100.0 &100.0&100.0
    & 29.2 &100.0&73.5 &100.0 &100.0 &100.0 &100.0&100.0 \\
2.0 & 22.3 &100.0&83.8 &100.0 &100.0 &100.0 &100.0&100.0
    & 46.6 &100.0&83.2 &100.0 &100.0 &100.0 &100.0&100.0 \\
\bottomrule
\end{tabular}}
\end{table}

von Mises -- Fisher distribution with mean direction $\mu_1=(1,0,\stackrel{p}{\dots},0)$ and concentration parameter $\kappa$ (see \cite{Banerjee2005}),

a mixture with equal probabilities of von Mises -- Fisher distributions with directions $\mu_1$ and $\mu_2=(0,1,0,\stackrel{p-1}{\dots},0)$ and the same concentration parameter $\kappa$,

the projection $Z/\|Z\|$ on $S^p$ of a centred normal vector $Z$ with variance $\Sigma=\mbox{diag}(1+\kappa, 1, \stackrel{p}{\dots}, 1)$,

the projection on $S^p$ of  $(Z_{p+1}+\kappa Z_1,(Z_i+\kappa Z_{i+1})_{i=1.2.\dots,p})$, where $Z_1$, $Z_2$, $\dots$, $Z_{p+1}$ are i.i.d. standard Normal,

 the projection on $S^p$ of centred normal vectors $(Z_1,\dots,Z_{p+1})$ with $\var Z_i=1, \cov(Z_i,Z_j)=\rho$ for $i,j=1,2,\dots,p+1, i\not=j$,

 the inverse image by the function $M_p$ introduced in \S\ref{paraM} by equation (\ref{Mp}) of a normal copula with parameter $\rho$ in $[0,1]^p$,

 the inverse image by $M_p$  of a Clayton copula with parameter $\theta$ in $[0,1]^p$,

 the inverse image by $M_p$  of a Gumbel copula with parameter $\theta$ in $[0,1]^p$.

\medskip

The powers are obtained by applying Rayleygh, Bingham and  Gin\'e Fn %
tests beside ours, to 1000 replications of i.i.d. samples.  The first three tests were computed by using the R environment (\cite{RCoreTeam2025}), by means of the sphunif package (\cite{EGP, EGP2}), 
and our m- and s-tests are available at MuniCandS R-package 
that can be accessed at \cite{MuniCandS}.

\medskip

The $m$- and $s$-tests exhibit strongly competitive behaviour against the classical
Rayleigh, Bingham, Sobolev and Gin\'{e} $F_n$ tests.
In particular, for alternatives based on copulas (Normal, Clayton and Gumbel)
transported to $S^p$ via $M_p$, the proposed tests dominate all competitors by a
wide margin, reaching $100\%$ power at moderate parameter values while rival tests
remain near their nominal level.
This is consistent with the fact that such alternatives introduce coordinate
dependence, precisely what the components $b_{n,H}$ with $\#H \geq 2$ are designed
to capture.
By contrast, for the von Mises--Fisher alternative, which represents a purely
marginal deviation (mass displacement along a single direction), the Rayleigh and
Gin\'{e} $F_n$ tests are more powerful competitors, while the partial versions with
$\#H \geq 2$ are expectedly weak.
This complementarity suggests that in practice it may be advisable to combine
the full test with its partial version depending on the type of deviation suspected.

\subsection{Empirical powers of normality tests}

The tests included with ours in the power comparison of Tables \ref{tN50}  and \ref{tN100} are the Baringhaus-Henze-Epps-Pulley (BHEP) test (\cite{BaringhausHenze1988}), the Henze-Zirkler (HZ) test (\cite{HenzeZirkler1990}), the D\"orr-Ebner-Henze test (DEHU) based on a double estimation in PDE (\cite{DorrEbnerHenze2021}), the Sz\'ekely-Rizzo (SR) test (\cite{SzekelyRizzo2005}), the Doornik-Hansen test (\cite{doornik2008omnibus}), the Royston test (\cite{royston1992multivariate}) and the Shapiro-Wilk test (\cite{villasenor2009generalization}).

\begin{table}[b!]
\caption{Empirical powers of several normality tests including m- and
s-tests applied to samples of size $n=50$ in $\R^3$ (left
panel) and $\R^6$ (right panel).}
\label{tN50}
\resizebox{\textwidth}{!}{%
\renewcommand{\arraystretch}{0.85}
\begin{tabular}{l rrrrrrrr rrrrrrrr}
\toprule
& \multicolumn{8}{c}{$\R^3$, $n=50$}
& \multicolumn{8}{c}{$\R^6$, $n=50$} \\
\cmidrule(lr){2-9}\cmidrule(lr){10-17}
Test $\to$
 & DEHU & DH & R & SW & $m$ & $s$ & $m_{h\ge2}$ & $s_{h\ge2}$
 & DEHU & DH & R & SW & $m$ & $s$ & $m_{h\ge2}$ & $s_{h\ge2}$ \\
\midrule
\textit{Est.\ level}
 & 0.0 & 5.1 & 7.2 & 8.5 & 6.1 & 7.1 & 6.5 & 7.2 
 & 5.7 & 5.4 & 7.6 & 4.8 & 5.6 & 5.3 & 5.8 & 5.1 \\
\midrule
$\mu$&\multicolumn{16}{c}{\textit{Mixture $\mathrm{Mix}(0.5,\mu,I_p)$, parameter $\mu$}}\\
3  & 4.9 & 4.7 & 7.1 & 5.6 & 5.6 & 4.8 & 5.6 & 5.0
   & 5.0 & 4.5 & 6.8 & 4.7 & 4.3 & 2.8 & 4.0 & 2.9 \\
6  & 4.7 & 4.4 & 6.5 & 4.9 & 5.5 & 4.4 & 5.3 & 4.6
   & 4.7 & 4.9 & 7.1 & 4.8 & 3.9 & 2.6 & 3.5 & 2.6 \\
9  & 4.7 & 4.7 & 6.9 & 4.7 & 5.6 & 4.7 & 5.6 & 4.8
   & 4.9 & 5.2 & 7.2 & 4.8 & 3.9 & 2.8 & 3.4 & 2.8 \\
\midrule
$P$&\multicolumn{16}{c}{\textit{Mixture $(P,0,B_p)$, parameter $P$}}\\
0.5& 5.1 & 4.7 & 6.6 & 4.3 & 5.8 & 5.0 & 6.2 & 5.1
   & 5.0 & 5.0 & 7.1 & 4.4 & 4.3 & 3.1 & 4.1 & 3.1 \\
0.9& 6.7 & 5.3 & 7.9 & 4.6 & 6.0 & 5.3 & 6.6 & 5.7
   & 5.3 & 5.2 & 5.9 & 4.4 & 4.0 & 3.8 & 3.6 & 3.9 \\
\midrule
\textit{d.f.}&\multicolumn{16}{c}{\textit{Multivariate $t$, degrees of freedom d.f.}}\\
3  & 96.9 & 91.8 & 93.8 & 88.1 & 97.9 & 98.1 & 98.0 & 98.2
   & 99.9 & 94.2 & 98.6 & 90.5 & 99.8 & 99.8 & 99.8 & 99.8 \\
5  & 75.5 & 63.3 & 69.0 & 53.5 & 78.7 & 82.2 & 81.3 & 84.4
   & 97.4 & 75.0 & 87.3 & 61.6 & 94.5 & 97.0 & 95.5 & 97.1 \\
10 & 33.5 & 27.8 & 33.2 & 20.2 & 37.7 & 44.3 & 42.6 & 46.3
   & 70.4 & 33.5 & 47.7 & 23.9 & 59.4 & 66.5 & 61.2 & 67.6 \\
15 & 20.1 & 18.2 & 20.8 & 13.5 & 24.6 & 29.1 & 28.3 & 31.2
   & 49.4 & 19.3 & 30.6 & 13.9 & 37.6 & 45.7 & 39.7 & 47.8 \\
\midrule
$\theta$&\multicolumn{16}{c}{\textit{$\Phi^{-1}_p$ of Clayton copula, parameter $\theta$}}\\
0.5& 13.3 &  5.2 &  7.0 &  7.1 &  6.6 &  7.4 &  8.0 &  9.3
   & 18.2 & 11.4 &  9.7 & 19.5 &  8.6 & 11.9 &  8.7 & 12.7 \\
1.0& 27.1 & 18.1 &  7.6 & 24.9 & 10.5 & 11.5 & 11.6 & 12.9
   & 29.5 & 62.9 &  8.4 & 73.6 & 21.0 & 27.9 & 21.2 & 28.9 \\
1.5& 49.4 & 44.1 &  6.6 & 54.9 & 22.8 & 26.1 & 23.8 & 27.4
   & 57.0 & 95.4 & 10.8 & 99.0 & 49.3 & 64.1 & 49.5 & 63.2 \\
2.0& 69.1 & 68.0 &  5.3 & 76.1 & 36.3 & 43.8 & 39.2 & 45.2
   & 80.5 & 99.4 &  9.3 & 99.8 & 72.7 & 90.2 & 75.0 & 88.9 \\
2.5& 82.6 & 79.5 &  8.9 & 88.4 & 51.3 & 61.3 & 56.0 & 61.7
   & 92.1 & 99.8 &  9.8 &100.0 & 91.2 & 96.8 & 92.0 & 96.4 \\
\midrule
$\theta$&\multicolumn{16}{c}{\textit{$\Phi^{-1}_p$ of Gumbel copula, parameter $\theta$}}\\
 2 & 35.2 & 13.2 &  7.9 & 12.8 & 16.9 & 20.4 & 20.9 & 24.4
   & 72.3 & 18.2 &  9.8 & 13.5 & 49.5 & 57.7 & 53.0 & 60.3 \\
 4 & 63.3 & 26.3 &  8.3 & 26.1 & 41.1 & 44.2 & 49.3 & 52.1
   & 94.4 & 36.1 &  7.6 & 21.1 & 81.6 & 87.6 & 84.0 & 88.7 \\
 8 & 75.6 & 33.5 &  6.7 & 34.7 & 56.0 & 57.7 & 64.5 & 65.8
   & 98.0 & 45.1 &  6.5 & 28.6 & 91.2 & 93.8 & 91.8 & 94.6 \\
16 & 79.8 & 37.8 &  6.3 & 37.9 & 60.7 & 62.6 & 68.5 & 69.4
   & 98.4 & 47.5 &  5.7 & 31.5 & 91.6 & 94.5 & 92.5 & 95.4 \\
\midrule
$\theta$&\multicolumn{16}{c}{\textit{$\rho^\theta U$, $\rho\sim\chi^2_p$, $U\sim\mathrm{Uniform}(S^{p-1})$, parameter $\theta$}}\\
0.6& 26.9 & 19.8 & 24.2 & 14.0 & 30.6 & 37.8 & 35.3 & 41.0
   & 36.7 & 14.0 & 20.5 & 11.3 & 27.2 & 36.4 & 29.8 & 37.6 \\
0.7& 66.7 & 46.1 & 51.2 & 38.5 & 70.1 & 77.9 & 76.4 & 80.9
   & 77.3 & 33.5 & 45.1 & 23.8 & 68.5 & 79.3 & 71.3 & 80.1 \\
0.8& 92.1 & 70.4 & 77.3 & 63.9 & 93.0 & 95.2 & 95.5 & 96.0
   & 95.8 & 55.8 & 68.5 & 43.4 & 90.8 & 96.9 & 92.5 & 97.4 \\
0.9& 98.6 & 86.4 & 91.7 & 83.5 & 98.2 & 99.2 & 98.9 & 99.3
   & 99.6 & 75.8 & 86.7 & 63.8 & 98.8 & 99.5 & 99.1 & 99.5 \\
\bottomrule
\end{tabular}}
\end{table}

The alternative distributions are borrowed from the empirical study in \cite{EbnerHenze(2020)}, namely

mixtures Mix$(P,\mu,\Sigma)$ $=(1-P)N(\bm 0,\bm I)+PN(\mu\bm u,\Sigma)$ for selected values of $\mu$ and $\Sigma$, where  $N(\bm\mu,\Sigma)$ is the p.d.f. of the Normal distribution on $\bm{R}^p$ with mean $\bm\mu$ and variance $\Sigma$, $\bm 0=(0,0,\dots,0)^{\mbox{\tiny tr}}$, $\bm 1=(1,1\dots,1)^{\mbox{\tiny tr}}\in\bm{R}^p$, $\bm I=\mbox{diag} (\bm 1)$ is the $p\times p$ identity matrix and $B=0.9 \bm 1\bm 1^{\mbox{\tiny tr}}+0.1\bm I$,

multivariate Student's  t-distribution for some chosen degrees of freedom, 

\noindent with the addition of

images by the inverse of the function $\Phi_p$ (see (\ref{Phimap})) applied to  Clayton copulas and Gumbel copulas, and

random vectors of the form $Z=\rho^\theta U$, where $U$ is uniformly distributed on $S^{p-1}$ and $\rho \sim \chi^2_p$ are independent random variables.

\begin{table}[t!]
\caption{Empirical powers of several normality tests including m- and
s-tests applied to samples of size $n=100$ in $\R^3$ (left
panel) and $\R^6$ (right panel).}
\label{tN100}
\resizebox{\textwidth}{!}{%
\renewcommand{\arraystretch}{0.85}
\begin{tabular}{l rrrrrrrr rrrrrrrr}
\toprule
& \multicolumn{8}{c}{$\R^3$, $n=100$}
& \multicolumn{8}{c}{$\R^6$, $n=100$} \\
\cmidrule(lr){2-9}\cmidrule(lr){10-17}
Test $\to$
 & DEHU & DH & R & SW & $m$ & $s$ & $m_{h\ge2}$ & $s_{h\ge2}$
 & DEHU & DH & R & SW & $m$ & $s$ & $m_{h\ge2}$ & $s_{h\ge2}$ \\
\midrule
\textit{Est.\ level}
 & 0.0 & 4.5 & 4.5 & 4.8 & 3.8 & 5.8 & 5.2 & 5.0
 & 3.9 & 4.3 & 7.4 & 4.8 & 4.1 & 3.8 & 3.6 & 3.9 \\
\midrule
$\mu$&\multicolumn{16}{c}{\textit{Mixture $\mathrm{Mix}(0.5,\mu,I_p)$, parameter $\mu$}}\\
3 & 6.5 & 5.8 & 7.1 & 4.9 & 6.2 & 5.9 & 6.1 & 5.8
  & 5.3 & 5.4 & 7.1 & 5.4 & 5.0 & 5.9 & 5.2 & 5.8 \\
6 & 6.6 & 5.5 & 6.9 & 4.6 & 6.3 & 6.0 & 6.2 & 5.9
  & 5.3 & 5.6 & 7.3 & 5.4 & 4.8 & 5.9 & 5.5 & 5.9 \\
9 & 5.9 & 5.5 & 6.9 & 4.6 & 6.4 & 6.1 & 6.0 & 6.0
  & 5.1 & 5.6 & 7.1 & 5.1 & 4.9 & 5.9 & 5.6 & 6.0 \\
\midrule
$P$&\multicolumn{16}{c}{\textit{Mixture $(P,0,B_p)$, parameter $P$}}\\
0.5& 5.6 & 5.8 & 5.3 & 3.8 & 6.1 & 6.2 & 5.9 & 6.0
   & 4.6 & 4.9 & 6.6 & 5.8 & 4.6 & 5.3 & 5.0 & 5.6 \\
0.9& 5.4 & 6.0 & 6.0 & 5.8 & 5.4 & 5.7 & 4.9 & 5.3
   & 4.8 & 7.2 & 6.2 & 6.3 & 5.3 & 5.8 & 5.1 & 6.1 \\
\midrule
\textit{d.f.}&\multicolumn{16}{c}{\textit{Multivariate $t$, degrees of freedom d.f.}}\\
 3& 99.9 & 99.1 & 99.3 & 98.7 & 99.9 & 99.9 &100.0 & 99.9
  &100.0 &100.0 &100.0 &100.0 &100.0 &100.0 &100.0 &100.0 \\
 5& 94.5 & 90.3 & 90.9 & 82.7 & 96.2 & 97.0 & 97.5 & 98.0
  &100.0 & 97.4 & 98.5 & 93.7 & 99.9 &100.0 & 99.9 &100.0 \\
10& 51.1 & 49.1 & 48.9 & 36.2 & 63.7 & 65.0 & 67.4 & 69.0
  & 92.9 & 62.6 & 68.4 & 47.8 & 85.8 & 91.9 & 87.4 & 92.1 \\
15& 27.8 & 28.6 & 30.0 & 20.3 & 38.6 & 40.6 & 42.6 & 43.9
  & 72.7 & 41.5 & 44.3 & 26.2 & 60.7 & 73.7 & 63.2 & 74.4 \\
\midrule
$\theta$&\multicolumn{16}{c}{\textit{$\Phi^{-1}_p$ of Clayton copula, parameter $\theta$}}\\
0.5& 23.4 &  7.5 &  7.2 & 10.0 &  5.4 &  6.4 &  7.0 &  8.4
   & 30.9 & 27.9 &  6.9 & 35.4 & 10.6 & 11.9 & 11.6 & 12.4 \\
1.0& 64.5 & 42.6 &  7.6 & 48.9 & 13.2 & 16.9 & 16.0 & 18.8
   & 62.9 & 98.3 &  8.3 & 99.1 & 34.7 & 44.0 & 36.1 & 44.4 \\
1.5& 91.3 & 85.1 &  9.0 & 89.5 & 34.9 & 40.0 & 38.4 & 41.0
   & 93.1 &100.0 &  9.5 &100.0 & 81.1 & 89.4 & 81.8 & 89.3 \\
2.0& 97.7 & 98.0 &  6.0 & 98.4 & 58.8 & 68.5 & 64.6 & 70.6
   & 99.7 &100.0 &  9.6 &100.0 & 97.2 & 99.3 & 97.7 & 99.3 \\
2.5& 99.7 & 99.8 &  7.6 & 99.6 & 79.4 & 88.4 & 83.5 & 89.3
   &100.0 &100.0 &  9.7 &100.0 &100.0 &100.0 &100.0 &100.0 \\
\midrule
$\theta$&\multicolumn{16}{c}{\textit{$\Phi^{-1}_p$ of Gumbel copula, parameter $\theta$}}\\
 2& 66.9 & 25.1 &  7.4 & 26.9 & 26.0 & 27.8 & 31.6 & 33.1
  & 96.4 & 33.4 &  9.7 & 26.2 & 73.9 & 79.7 & 76.1 & 80.8 \\
 4& 93.2 & 51.7 &  5.8 & 53.4 & 62.9 & 63.9 & 68.4 & 69.5
  &100.0 & 65.0 &  8.0 & 48.1 & 97.9 & 98.2 & 98.1 & 98.3 \\
 8& 97.1 & 62.1 &  5.9 & 64.4 & 77.6 & 76.9 & 81.7 & 82.5
  &100.0 & 76.6 &  5.9 & 61.2 & 99.6 & 99.5 & 99.7 & 99.6 \\
16& 98.6 & 68.2 &  5.5 & 67.9 & 82.9 & 82.3 & 86.5 & 87.8
  &100.0 & 82.6 &  6.2 & 68.0 &100.0 &100.0 &100.0 &100.0 \\
\midrule
$\theta$&\multicolumn{16}{c}{\textit{$\rho^\theta U$, $\rho\sim\chi^2_p$, $U\sim\mathrm{Uniform}(S^{p-1})$, parameter $\theta$}}\\
0.6& 44.8 & 29.9 & 32.5 & 23.4 & 51.9 & 57.7 & 58.4 & 61.9
   & 59.5 & 24.3 & 26.2 & 16.0 & 49.1 & 62.2 & 52.4 & 63.5 \\
0.7& 93.1 & 73.3 & 76.8 & 65.8 & 95.1 & 96.7 & 96.4 & 97.4
   & 97.1 & 62.6 & 64.8 & 46.4 & 93.7 & 97.2 & 94.4 & 97.3 \\
0.8& 99.6 & 93.8 & 96.7 & 93.8 & 99.6 & 99.7 & 99.9 & 99.8
   & 99.9 & 88.0 & 91.1 & 80.5 & 99.7 & 99.9 & 99.7 & 99.9 \\
0.9&100.0 & 99.0 & 99.6 & 98.8 &100.0 &100.0 &100.0 &100.0
   &100.0 & 96.9 & 98.9 & 94.2 &100.0 &100.0 &100.0 &100.0 \\
\bottomrule
\end{tabular}}
\end{table}

\medskip

The $m$- and $s$-tests are highly competitive with the benchmark procedures
(BHEP, HZ, DEHU, SR, DH, Royston, Shapiro--Wilk) across almost all scenarios.
They are particularly strong against multivariate $t$ alternatives, where they
match or outperform the best competitors.
For Clayton and Gumbel copula alternatives transformed via $\Phi^{-1}_p$, where
marginal normality is preserved but the dependence structure is non-Gaussian,
the proposed tests achieve very reasonable power, although in dimension $p = 3$
with $n = 50$ the DEHU and Shapiro--Wilk tests are somewhat more powerful.
In dimension $p = 6$ the situation reverses favourably toward the proposed tests.
For mixture alternatives with large $\mu$, all tests show power close to the
nominal level, reflecting the intrinsic difficulty of distinguishing such mixtures
from the normal distribution when contamination is sparse.

\subsection{Empirical powers of isotropy tests} 

To the best of our knowledge, there are no R packages implementing consistent tests specifically designed for spherical symmetry (isotropy) of multivariate distributions. Given this gap, we selected a well-established procedure as benchmark for the power comparison study: the test proposed by Banerjee \& Ghosh (2024) \cite{banerjee2024spherical}. 
 The finite-sample performance of that test 
 is compared against our proposed procedures through a simulation study conducted in R. Since no existing implementation was available, 
 the benchmark test was programmed in R.

The alternatives included are

$X=Z\times M(a)$, where $Z$ is standard normal in $\R^p$ and $M(a)$ is the $p\times p$ matrix with elements $m_{i,j}=\un_{i=j}+a\un_{i=j-1}$,

$X$ centred normal with variance $\rho \bm1\bm 1^{\mbox{\tiny tr}}+(1-\rho)\bm I$ and

Gumbel copula with parameter $\theta$

\noindent for several values of the parameters $a,\rho,\theta$.

\medskip

The 
empirical powers are recorded in Tables \ref{tI50} and \ref{tI100}.

\begin{table}[htbp]
\caption{Empirical powers of Banerjee \& Ghosh (B\&G), m- and s-isotropy
tests applied to samples of size $n=50$ in $\R^3$ (left panel)
and $\R^6$ (right panel).}
\label{tI50}
\resizebox{\textwidth}{!}{%
\renewcommand{\arraystretch}{0.85}
\begin{tabular}{l rrrrr rrrrr}
\toprule
& \multicolumn{5}{c}{$\R^3$, $n=50$}
& \multicolumn{5}{c}{$\R^6$, $n=50$} \\
\cmidrule(lr){2-6}\cmidrule(lr){7-11}
Test $\to$
 & B\&G & $m$ & $s$ & $m_{h\ge2}$ & $s_{h\ge2}$
 & B\&G & $m$ & $s$ & $m_{h\ge2}$ & $s_{h\ge2}$ \\
\midrule
\textit{Est.\ level}
 & 6.1 & 3.3 & 3.8 & 4.5 & 5.8 & 5.8 & 3.7 & 4.9 & 4.1 & 4.0 \\
\midrule
$a$&\multicolumn{10}{c}{\textit{$X=Z\times M(a)$, $Z\sim N(0,I)$, $M(a)_{ij}=\bm{1}_{i=j}+a\,\bm{1}_{i=j-1}$}}\\
0.5 & 14.4 & 5.7 & 7.9 & 6.2 & 9.8 & 14.2 & 47.6 & 23.5 & 50.5 & 20.5 \\
1.0 & 50.5 & 16.9 & 27.4 & 26.8 & 26.7 & 48.9 & 90.5 & 44.1 & 93.5 & 40.3 \\
1.5 & 80.7 & 27.2 & 51.2 & 39.9 & 27.6 & 64.0 & 96.5 & 33.8 & 97.8 & 26.5 \\
2.0 & 91.4 & 67.4 & 79.7 & 32.0 & 19.9 & 62.7 & 97.1 & 15.6 & 96.7 & 10.3 \\
2.5 & 95.4 & 98.1 & 95.3 & 15.4 & 12.2 & 58.1 & 98.8 & 8.3 & 93.0 & 3.3 \\
3.0 & 96.2 & 99.7 & 98.5 & 5.0 & 5.6 & 50.7 & 99.8 & 4.0 & 81.4 & 1.0 \\
\midrule
$\rho$&\multicolumn{10}{c}{\textit{$X\sim N(0,\rho\bm{1}\bm{1}^\top+(1-\rho)I)$}}\\
2.0 & 6.4 & 7.0 & 6.5 & 10.7 & 10.7 & 10.6 & 13.0 & 15.3 & 14.2 & 13.0 \\
3.0 & 8.7 & 15.0 & 16.5 & 20.6 & 22.8 & 17.6 & 32.5 & 41.2 & 33.0 & 38.2 \\
4.0 & 15.6 & 29.0 & 33.0 & 38.8 & 44.0 & 33.1 & 63.8 & 73.3 & 66.0 & 70.9 \\
5.0 & 24.3 & 52.5 & 58.4 & 62.9 & 69.8 & 57.1 & 89.7 & 96.2 & 89.9 & 95.3 \\
6.0 & 40.3 & 77.6 & 82.3 & 84.7 & 89.9 & 82.8 & 98.3 & 99.6 & 98.2 & 99.5 \\
7.0 & 62.2 & 94.7 & 97.7 & 96.8 & 99.1 & 96.2 & 99.9 & 100.0 & 100.0 & 100.0 \\
8.0 & 83.0 & 99.8 & 100.0 & 100.0 & 100.0 & 99.8 & 100.0 & 100.0 & 100.0 & 100.0 \\\bottomrule
\end{tabular}}
\end{table}

\begin{table}[htbp]
\caption{Empirical powers of Banerjee \& Ghosh (B\&G), m- and s-isotropy
tests applied to samples of size $n=100$ in $\R^3$ (left panel)
and $\R^6$ (right panel).}
\label{tI100}
\resizebox{\textwidth}{!}{%
\renewcommand{\arraystretch}{0.85}
\begin{tabular}{l rrrrr rrrrr}
\toprule
& \multicolumn{5}{c}{$\R^3$, $n=100$}
& \multicolumn{5}{c}{$\R^6$, $n=100$} \\
\cmidrule(lr){2-6}\cmidrule(lr){7-11}
Test $\to$
 & B\&G & $m$ & $s$ & $m_{h\ge2}$ & $s_{h\ge2}$
 & B\&G & $m$ & $s$ & $m_{h\ge2}$ & $s_{h\ge2}$ \\
\midrule
\textit{Est.\ level}
& 5.2 & 4.7 & 4.7 & 6.3 & 6.3 & 5.4 & 3.3 & 6.1 & 2.4 & 3.0 \\
\midrule
$a$&\multicolumn{10}{c}{\textit{$X=Z\times M(a)$, $Z\sim N(0,I)$}}\\
0.5 & 31.5 & 18.4 & 36.5 & 31.9 & 41.1 & 40.0 & 95.6 & 77.9 & 93.9 & 66.5 \\
1.0 & 96.7 & 92.4 & 98.9 & 98.8 & 96.7 & 97.9 & 100.0 & 100.0 & 100.0 & 99.3 \\
1.5 & 100.0 & 100.0 & 100.0 & 100.0 & 99.7 & 100.0 & 100.0 & 100.0 & 100.0 & 98.9 \\
2.0 & 100.0 & 100.0 & 100.0 & 100.0 & 99.4 & 99.7 & 100.0 & 99.9 & 100.0 & 88.4 \\
2.5 & 100.0 & 100.0 & 100.0 & 100.0 & 98.6 & 98.9 & 100.0 & 99.0 & 100.0 & 56.8 \\
3.0 & 100.0 & 100.0 & 100.0 & 99.7 & 91.2 & 97.5 & 100.0 & 97.2 & 100.0 & 25.6 \\
\midrule
$\rho$&\multicolumn{10}{c}{\textit{$X\sim N(0,\rho\bm{1}\bm{1}^\top+(1-\rho)I)$}}\\
2.0 & 8.9 & 16.3 & 19.2 & 24.7 & 27.8 & 16.1 & 36.2 & 52.2 & 33.6 & 43.5 \\
3.0 & 16.1 & 44.4 & 50.9 & 57.2 & 62.2 & 42.2 & 76.5 & 92.4 & 74.0 & 88.4 \\
4.0 & 32.4 & 75.2 & 83.5 & 85.8 & 90.4 & 80.2 & 97.5 & 99.9 & 97.2 & 99.5 \\
5.0 & 58.4 & 96.2 & 98.9 & 98.4 & 99.5 & 98.0 & 99.9 & 100.0 & 99.9 & 100.0 \\
6.0 & 85.9 & 100.0 & 100.0 & 100.0 & 100.0 & 99.9 & 100.0 & 100.0 & 100.0 & 100.0 \\
7.0 & 98.5 & 100.0 & 100.0 & 100.0 & 100.0 & 100.0 & 100.0 & 100.0 & 100.0 & 100.0 \\
8.0 & 100.0 & 100.0 & 100.0 & 100.0 & 100.0 & 100.0 & 100.0 & 100.0 & 100.0 & 100.0 \\
\bottomrule
\end{tabular}}
\end{table}

\medskip

The results reveal a clear distinction between the two families of alternatives.
 
For the alternative $X = Z \times M(a)$, the picture depends heavily on dimension
and sample size.
In $\R^3$ with $n=50$, B\&G is competitive and
even outperforms the proposed tests for moderate values of $a$ ($a \leq 2$),
while for larger $a$ the full $m$- and $s$-tests dominate.
In $\R^6$, however, the proposed tests are dramatically more powerful than
B\&G for all values of $a$, with the full $m$-test reaching powers above $90\%$
already at $a=1.0$ while B\&G stays below $50\%$.
With $n=100$ the full $m$- and $s$-tests reach $100\%$ power across virtually all
configurations in both dimensions, matching or surpassing B\&G.
A noteworthy feature is the behaviour of the partial versions ($h\geq2$): their power declines sharply as $a$ increases, particularly in
$\R^6$, suggesting that for large $a$ the deviation from isotropy manifests
primarily through marginal components ($\#H = 1$) that these partial tests
deliberately exclude.
 
For the Normal alternative with covariance
$\rho\,\bm{1}\bm{1}^{\top} + (1-\rho)I$, the superiority of the proposed
tests over B\&G is systematic and pronounced across all configurations.
In $\R^3$ with $n=50$, the proposed tests achieve $99.8\%$ power at
$\rho=8$ while B\&G reaches $83\%$; in $\R^6$ the gap opens even earlier,
with the $s$-test reaching $96\%$ at $\rho=5$ against $57\%$ for B\&G.
With $n=100$ the proposed tests saturate at $100\%$ power at values of $\rho$
where B\&G is still well below $100\%$, particularly in $\R^3$.
In this alternative the partial versions perform
similarly to or even better than the full tests, indicating that the covariance
structure affects primarily the interaction components of the decomposition
rather than the marginals.

\
\subsection{Empirical powers of ellipticity tests}
We compare the powers of our tests with the powers of the tests by Manzotti, Pérez and Quirós (\cite{manzotti2002statistic}) and Schott
(\cite{schott2002elliptical}). The alternatives are normal mixtures, Azzalini's skew normals \cite{azzalini1985} and images by $\Phi^{-1}_p$ of Clayton and Gumbel copulas. The empirical powers are shown in Tables \ref{tE50} and \ref{tE100}.

\begin{table}[t!]
\caption{Empirical powers of m- and s-tests of elliptic symmetry applied
to samples of size $n=50$ in $\R^3$ (left panel) and
$\R^6$ (right panel).}
\label{tE50}
\resizebox{\textwidth}{!}{%
\renewcommand{\arraystretch}{0.85}
\begin{tabular}{l rrrrrr rrrrrr}
\toprule
& \multicolumn{6}{c}{$\R^3$, $n=50$}
& \multicolumn{6}{c}{$\R^6$, $n=50$} \\
\cmidrule(lr){2-7}\cmidrule(lr){8-13}
Test $\to$
 & MPQ & Schott & $m$ & $s$ & $m_{h\ge2}$ & $s_{h\ge2}$
 & MPQ & Schott & $m$ & $s$ & $m_{h\ge2}$ & $s_{h\ge2}$ \\
\midrule
\textit{Est.\ level}
 & 4.5 & 2.7 & 4.3 & 4.6 & 3.5 & 4.2
 & 5.0 & 5.1 & 6.1 & 8.5 & 7.0 & 7.3 \\
\midrule
$P$&\multicolumn{12}{c}{\textit{$Z+B_P\times(3,\ldots,3)$, $Z\sim N(0,I), B_P\sim\mathrm{Bernoulli}(P)$}}\\
0.5 & 28.8 & 23.2 & 19.7 & 30.1 & 32.2 & 38.7
    & 21.3 & 11.3 & 10.5 & 9.5 & 11.2 & 9.3 \\
0.7 & 38.3 &  6.9 & 33.1 & 50.7 & 50.5 & 60.3
    & 21.7 &  7.2 & 10.6 & 13.9 & 11.8 & 12.9 \\
0.9 & 26.3 & 36.9 & 25.8 & 40.9 & 40.1 & 42.4
    & 33.4 & 37.2 & 12.4 & 17.7 & 14.4 & 15.9 \\
\midrule
$\rho,P$&\multicolumn{12}{c}{\textit{$B_P Z_0+(1-B_P)Z_\rho$, $Z_\rho\sim N(0,\rho\bm{1}\bm{1}^\top+(1-\rho)I)$}}\\
0.9,0.5 & 32.1 & 33.2 & 15.1 & 19.7 & 19.6 & 17.5
          & 77.3 & 53.9 & 37.2 & 53.9 & 38.8 & 48.5 \\
0.9,0.9 &  8.7 & 15.4 & 12.7 & 16.1 & 14.7 & 12.9
           & 12.9& 23.9 & 23.2 & 19.2 & 22.3 & 15.4 \\
\midrule
$\alpha$&\multicolumn{12}{c}{\textit{Skew-Normal, shape parameter $\alpha$}}\\
1 &  4.8 & 3.2 &  5.8 &  7.1 &  5.0 &  6.0
  &  5.7 & 5.3 &  4.5 &  4.3 &  5.9 &  2.7 \\
2 &  8.1 & 3.2 &  7.1 &  8.3 &  7.4 &  7.2
  &  7.1 & 4.2 &  6.1 &  4.9 &  8.8 &  3.5 \\
3 & 10.3 & 3.4 &  9.1 & 11.9 &  9.2 & 11.4
  &  7.5 & 3.9 &  5.0 &  4.6 &  7.6 &  3.3 \\
4 & 10.9 & 3.7 & 10.5 & 14.2 & 10.4 & 12.9
  &  8.4 & 3.8 &  6.1 &  5.8 &  8.4 &  4.2 \\
5 &  9.5 & 3.5 & 10.6 & 14.8 & 10.5 & 14.4
  &  8.2 & 4.1 &  6.8 &  6.1 &  8.7 &  5.2 \\
\midrule
$\theta$&\multicolumn{12}{c}{\textit{$\Phi_p^{-1}$ of Clayton copula, parameter $\theta$}}\\
0.5 &  8.9 & 2.9 &  5.2 &  5.4 &  4.3 &  5.8
    & 10.2 & 6.1 &  5.8 &  4.3 &  5.5 &  3.4 \\
1.0 & 20.1 & 4.0 &  7.6 &  9.0 &  6.1 &  5.6
    & 25.0 & 5.4 &  8.5 &  5.3 &  5.7 &  3.8 \\
1.5 & 32.5 & 5.7 &  6.6 &  9.6 &  3.5 &  5.2
    & 43.4 & 8.4 & 18.1 &  8.5 &  4.2 &  4.0 \\
2.0 & 45.0 & 5.7 & 12.2 & 16.0 &  5.0 &  6.1
    & 60.4 &10.9 & 36.1 & 18.9 &  8.2 &  5.6 \\
2.5 & 56.1 & 7.7 & 16.9 & 24.3 &  6.2 &  9.8
    & 75.3 &13.0 & 51.8 & 30.9 &  9.5 &  9.2 \\
3.0 & 66.2 &10.1 & 22.8 & 34.0 &  6.9 & 11.1
    & 83.0 &16.9 & 68.0 & 44.6 & 14.1 & 11.1 \\
\midrule
$\theta$&\multicolumn{12}{c}{\textit{$\Phi_p^{-1}$ of Gumbel copula, parameter $\theta$}}\\
2 & 20.5 & 3.8 & 10.1 & 12.1 & 12.8 & 12.2
  & 26.9 & 7.4 & 16.8 & 21.8 & 16.9 & 20.2 \\
3 & 24.2 & 4.8 & 14.8 & 19.1 & 17.5 & 18.5
  & 33.4 & 6.9 & 20.5 & 28.3 & 21.4 & 24.7 \\
4 & 25.3 & 4.6 & 18.6 & 21.6 & 20.4 & 23.1
  & 37.6 & 7.9 & 24.2 & 32.1 & 25.1 & 28.0 \\
5 & 27.8 & 4.8 & 21.6 & 25.7 & 21.8 & 24.7
  & 42.5 & 8.6 & 25.3 & 32.4 & 26.3 & 29.1 \\
\bottomrule
\end{tabular}}
\end{table}

\begin{table}[htb]
\caption{Empirical powers of m- and s-tests of elliptic symmetry applied
to samples of size $n=100$ in $\R^3$ (left panel) and
$\R^6$ (right panel).}
\label{tE100}
\resizebox{\textwidth}{!}{%
\renewcommand{\arraystretch}{0.85}
\begin{tabular}{l rrrrrr rrrrrr}
\toprule
& \multicolumn{6}{c}{$\R^3$, $n=100$}
& \multicolumn{6}{c}{$\R^6$, $n=100$} \\
\cmidrule(lr){2-7}\cmidrule(lr){8-13}
Test $\to$
 & MPQ & Schott & $m$ & $s$ & $m_{h\ge2}$ & $s_{h\ge2}$
 & MPQ & Schott & $m$ & $s$ & $m_{h\ge2}$ & $s_{h\ge2}$ \\
\midrule
\textit{Est.\ level}
 & 5.0 & 3.2 & 5.9 & 5.6 & 7.7 & 6.0
 & 4.9 & 5.6 & 6.7 & 4.9 & 5.6 & 5.3 \\
\midrule
$P$&\multicolumn{12}{c}{\textit{$Z+B_P\times(3,\ldots,3)$, $B_P\sim\mathrm{Bernoulli}(P)$}}\\
0.5 & 60.9 & 62.3 & 54.3 & 67.4 & 66.4 & 80.4
    & 54.9 & 20.5 & 11.8 & 21.6 & 10.5 & 24.1 \\
0.7 & 78.9 & 13.2 & 81.8 & 93.1 & 89.2 & 97.2
    & 51.9 &  9.4 & 15.9 & 34.8 & 15.1 & 36.5 \\
0.9 & 55.2 & 64.1 & 58.6 & 77.2 & 68.3 & 80.3
    & 77.9 & 63.6 & 22.9 & 59.7 & 21.1 & 58.9 \\
\midrule
$\rho,P$&\multicolumn{12}{c}{\textit{$B_P Z_0+(1-B_P)Z_\rho$, $Z_\rho\sim N(0,\rho\bm{1}\bm{1}^\top+(1-\rho)I)$}}\\
0.9,0.5& 66.8 & 70.6 & 27.6 & 39.4 & 29.2 & 35.2
                & 99.5 & 84.2 & 73.2 & 96.1 & 70.6 & 94.5 \\
0.9,0.9& 11.4 & 31.5 & 26.1 & 29.6 & 25.5 & 25.3
                & 17.4 & 17.5 & 42.6 & 44.8 & 38.6 & 36.9 \\
\midrule
$\alpha$&\multicolumn{12}{c}{\textit{Skew-Normal, shape parameter $\alpha$}}\\
1 &  7.4 & 4.5 &  5.9 &  7.6 &  8.0 &  7.3
  &  6.1 & 4.3 &  6.6 &  6.7 &  6.8 &  6.7 \\
2 & 10.9 & 3.7 & 10.7 & 15.3 & 15.4 & 15.3
  &  9.9 & 3.9 &  9.0 & 10.6 &  9.7 & 10.5 \\
3 & 13.3 & 3.9 & 15.2 & 22.5 & 21.8 & 23.5
  & 10.4 & 4.8 &  9.6 & 12.1 &  9.5 & 11.3 \\
4 & 15.8 & 5.6 & 18.2 & 26.3 & 26.8 & 28.2
  & 11.0 & 5.4 &  9.5 & 12.3 &  9.6 & 11.5 \\
5 & 17.2 & 5.9 & 21.3 & 29.8 & 26.9 & 30.7
  & 12.4 & 5.7 & 10.5 & 11.6 & 10.9 & 11.2 \\
\midrule
$\theta$&\multicolumn{12}{c}{\textit{$\Phi_p^{-1}$ of Clayton copula, parameter $\theta$}}\\
0.5 & 15.8 &  4.5 &  7.6 &  9.4 &  7.9 &  8.3
    & 21.7 &  6.5 &  8.4 &  8.6 &  7.4 &  6.9 \\
1.0 & 46.4 &  6.5 & 12.2 & 15.0 &  8.0 &  7.9
    & 61.4 &  8.5 & 14.6 & 11.2 &  4.3 &  3.8 \\
1.5 & 71.0 &  9.7 & 23.4 & 26.9 &  7.3 &  9.0
    & 88.9 & 15.4 & 52.4 & 38.6 &  7.6 &  5.1 \\
2.0 & 85.0 & 14.0 & 34.3 & 42.5 &  7.3 & 12.8
    & 98.3 & 24.7 & 81.2 & 71.1 & 12.2 & 14.1 \\
2.5 & 92.0 & 18.1 & 53.2 & 61.6 & 11.0 & 16.2
    & 99.7 & 30.1 & 94.8 & 93.4 & 21.8 & 34.2 \\
3.0 & 96.8 & 19.6 & 62.2 & 73.9 & 13.8 & 23.3
    &100.0 & 38.6 & 99.0 & 97.5 & 36.6 & 51.8 \\
\midrule
$\theta$&\multicolumn{12}{c}{\textit{$\Phi_p^{-1}$ of Gumbel copula, parameter $\theta$}}\\
2 & 38.1 &  8.3 & 22.7 & 26.2 & 24.6 & 25.3
  & 60.3 &  7.1 & 32.3 & 58.3 & 32.1 & 56.1 \\
3 & 48.7 &  8.6 & 36.7 & 43.2 & 37.4 & 41.7
  & 73.6 &  6.5 & 43.6 & 75.2 & 44.0 & 72.0 \\
4 & 52.7 &  8.4 & 41.3 & 50.7 & 41.9 & 51.3
  & 79.9 &  7.5 & 53.7 & 82.3 & 53.6 & 78.0 \\
5 & 55.1 &  9.4 & 46.6 & 56.8 & 47.2 & 56.9
  & 82.4 & 10.2 & 57.4 & 85.0 & 58.1 & 81.3 \\
\bottomrule
\end{tabular}}
\end{table}

\medskip

The results for elliptic symmetry are more nuanced than in the other settings.
Against mixture alternatives of the form $Z + B_P \times (3,\ldots,3)$, the
proposed tests are competitive with the MPQ test 
and clearly outperform Schott's test 
 particularly for $n = 100$
in both dimensions.

For the skew-normal alternative, all tests show modest power, which is not
surprising given that skew-normal distributions are close to elliptically symmetric
for small shape parameters $\alpha$; the $s$-test nevertheless shows a mild but
consistent advantage over Schott's test across all values of $\alpha$.
The most informative scenario is again provided by the copula alternatives
transported via $\Phi^{-1}_p$: for Clayton copulas in dimension $p = 6$ with
$n = 100$ (Table~\ref{tE100}), the $m$- and $s$-tests reach powers
above $99\%$ for $\theta = 3$, while Schott's test stays below $40\%$ and even
the MPQ test, despite being more powerful, is outperformed by the proposed
procedures at moderate and large $\theta$.
For Gumbel copulas the picture is similar, with the $s$-test consistently
dominating across both dimensions and sample sizes.
The general pattern confirms that the proposed tests are particularly sensitive
to dependence structures that deviate from elliptical symmetry through coordinate
interactions, while remaining reasonably competitive on location-contamination
and skewness alternatives.

\subsection{Empirical powers of independence tests}

Tables \ref{tIN50} and \ref{tIN100} describe the empirical powers of our m- and s-tests and two multivariate independence tests included in R-packages, namely, the {\sf ind\_test} of {\sf TICM} package based on characteristic functions, with the default parameters (\cite{Hallin2024}) and the  {\sf multivariance.test} of the {\sf multivariance} package (\cite{Boettcher2020}).

The alternatives are Normal, Clayton and Gumbel copulas with several choices of the parameter, and mixtures $(1-B_P)\bm U_p+B_PB\&P_p$ with $B_P\sim$Bernou\-lli$(P)$ of the $p$-variate uniform copula $\bm U_p$ with B\"ucher \& Pakzad (see \cite{ bucher2024testing}) example of a tridimensional copula with bivariate marginal independence but joint dependence: $B\&P_p=(U_1,\dots,U_p)$, $U_2,\dots,U_p$ independent uniform on $[0,1]$ and $U_1=U_2+U_3$ mod 1.

\begin{table}[t]
\caption{Empirical powers of TICM, Multivariance, m- and s-independence
tests applied to samples of size $n=50$ in $\R^3$ (left panel)
and $\R^6$ (right panel).}
\label{tIN50}
\resizebox{\textwidth}{!}{%
\renewcommand{\arraystretch}{0.85}
\begin{tabular}{l rrrrrr rrrrrr}
\toprule
& \multicolumn{6}{c}{$\R^3$, $n=50$}
& \multicolumn{6}{c}{$\R^6$, $n=50$} \\
\cmidrule(lr){2-7}\cmidrule(lr){8-13}
Test $\to$
 & TICM & Mult. & $m$ & $s$ & $m_{h\ge2}$ & $s_{h\ge2}$
 & TICM & Mult. & $m$ & $s$ & $m_{h\ge2}$ & $s_{h\ge2}$ \\
\midrule
\textit{Est.\ level}
 & 6.2 & 5.8 & 3.9 & 4.7 & 4.9 & 5.6
 & 4.7 & 6.2 & 5.4 & 6.8 & 5.4 & 6.5 \\
\midrule
$\rho$&\multicolumn{12}{c}{\textit{Normal copula, parameter $\rho$}}\\
0.2 & 22.4 & 38.8 & 14.6 & 24.1 & 28.4 & 35.3
    & 29.5 & 69.0 & 29.3 & 52.9 & 32.9 & 56.3 \\
0.4 & 75.9 & 94.2 & 64.5 & 80.2 & 84.2 & 91.9
    & 89.8 & 99.8 & 92.4 & 99.3 & 93.7 & 99.7 \\
0.6 & 99.2 &100.0 & 99.0 & 99.8 & 99.9 &100.0
    &100.0 &100.0 &100.0 &100.0 &100.0 &100.0 \\
0.8 &100.0 &100.0 &100.0 &100.0 &100.0 &100.0
    &100.0 &100.0 &100.0 &100.0 &100.0 &100.0 \\
\midrule
$\theta$&\multicolumn{12}{c}{\textit{Clayton copula, parameter $\theta$}}\\
0.2 & 14.0 & 20.8 &  8.5 & 12.5 & 17.9 & 20.2
    & 18.0 & 52.9 & 18.0 & 39.9 & 17.8 & 41.9 \\
0.4 & 41.5 & 62.9 & 26.5 & 41.1 & 46.8 & 58.8
    & 55.4 & 94.7 & 58.7 & 85.6 & 59.6 & 89.7 \\
0.6 & 70.4 & 89.7 & 52.9 & 70.3 & 75.6 & 86.9
    & 85.3 & 99.8 & 86.1 & 98.9 & 87.6 & 99.3 \\
0.8 & 87.7 & 96.8 & 77.7 & 88.0 & 91.0 & 95.5
    & 96.4 & 99.9 & 98.1 & 99.8 & 98.2 & 99.8 \\
1.0 & 95.8 & 99.3 & 89.5 & 95.8 & 96.3 & 98.7
    & 99.3 &100.0 & 99.6 &100.0 & 99.8 &100.0 \\
    \midrule
$\theta$&\multicolumn{12}{c}{\textit{Gumbel copula, parameter $\theta$}}\\
1.2 & 39.2 & 64.9 & 26.1 & 42.3 & 48.8 & 61.2
    & 60.4 & 94.7 & 68.2 & 91.3 & 70.4 & 92.4 \\
1.4 & 87.1 & 98.2 & 77.8 & 90.3 & 89.9 & 97.2
    & 96.1 &100.0 & 97.9 &100.0 & 98.6 &100.0 \\
1.6 & 98.4 & 99.9 & 96.5 & 99.3 & 99.4 & 99.9
    & 99.8 &100.0 &100.0 &100.0 &100.0 &100.0 \\
1.8 &100.0 &100.0 & 99.6 &100.0 &100.0 &100.0
    &100.0 &100.0 &100.0 &100.0 &100.0 &100.0 \\
\midrule
$P$&\multicolumn{12}{c}{\textit{Mixture $(1-B_P)U_p+B_P\mathrm{B\&P}_p$, $B_P\sim\mathrm{Bernoulli}(P)$}}\\
0.2 &  8.1 &  6.8 &  4.0 &  5.5 &  6.0 &  7.4
    &  5.2 &  5.9 &  5.6 &  6.7 &  5.0 &  6.6 \\
0.4 & 27.6 &  8.4 &  5.4 &  7.9 &  9.3 & 11.7
    &  8.2 &  7.5 &  5.6 &  8.8 &  6.1 &  7.7 \\
0.6 & 71.4 & 16.4 &  8.5 & 12.6 & 23.4 & 26.0
    & 16.9 &  9.3 &  5.3 &  8.6 &  5.7 &  8.6 \\
0.8 & 98.1 & 31.2 & 20.5 & 25.9 & 64.8 & 56.1
    & 31.6 & 13.0 &  9.8 & 12.1 & 12.2 & 13.6 \\
1.0 &100.0 & 55.9 & 75.0 & 44.8 & 99.6 & 95.4
    & 53.9 & 19.5 & 42.7 & 18.2 & 49.5 & 19.7 \\
\bottomrule
\end{tabular}}
\end{table}

\begin{table}[t]
\caption{Empirical powers of TICM, Multivariance, m- and s-independence
tests applied to samples of size $n=100$ in $\R^3$ (left panel)
and $\R^6$ (right panel).}
\label{tIN100}
\resizebox{\textwidth}{!}{%
\renewcommand{\arraystretch}{0.85}
\begin{tabular}{l rrrrrr rrrrrr}
\toprule
& \multicolumn{6}{c}{$\R^3$, $n=100$}
& \multicolumn{6}{c}{$\R^6$, $n=100$} \\
\cmidrule(lr){2-7}\cmidrule(lr){8-13}
Test $\to$
 & TICM & Mult. & $m$ & $s$ & $m_{h\ge2}$ & $s_{h\ge2}$
 & TICM & Mult. & $m$ & $s$ & $m_{h\ge2}$ & $s_{h\ge2}$ \\
\midrule
\textit{Est.\ level}
 & 5.0 & 5.4 & 5.3 & 5.4 & 5.9 & 5.7
 & 4.2 & 5.6 & 6.6 & 5.0 & 6.6 & 5.5 \\
\midrule
$\rho$&\multicolumn{12}{c}{\textit{Normal copula, parameter $\rho$}}\\
0.2 & 43.4 & 69.5 & 45.4 & 52.4 & 53.6 & 66.9
    & 64.3 & 96.6 & 62.3 & 88.8 & 60.9 & 90.4 \\
0.4 & 97.6 &100.0 & 98.9 & 98.9 & 99.4 & 99.9
    & 99.9 &100.0 &100.0 &100.0 &100.0 &100.0 \\
0.6 &100.0 &100.0 &100.0 &100.0 &100.0 &100.0
    &100.0 &100.0 &100.0 &100.0 &100.0 &100.0 \\
0.8 &100.0 &100.0 &100.0 &100.0 &100.0 &100.0
    &100.0 &100.0 &100.0 &100.0 &100.0 &100.0 \\
\midrule
$\theta$&\multicolumn{12}{c}{\textit{Clayton copula, parameter $\theta$}}\\
0.2 & 27.4 & 43.5 & 24.3 & 32.6 & 30.4 & 40.3
    & 38.0 & 80.7 & 33.5 & 64.3 & 33.4 & 67.7 \\
0.4 & 74.5 & 90.7 & 72.2 & 79.7 & 78.9 & 88.0
    & 91.6 & 99.8 & 90.1 & 99.5 & 89.6 & 99.6 \\
0.6 & 95.6 & 99.2 & 94.5 & 97.0 & 97.1 & 99.0
    & 99.7 &100.0 & 99.2 &100.0 & 99.3 &100.0 \\
0.8 &100.0 &100.0 & 99.6 &100.0 & 99.7 &100.0
    &100.0 &100.0 &100.0 &100.0 &100.0 &100.0 \\
1.0 &100.0 &100.0 &100.0 &100.0 &100.0 &100.0
    &100.0 &100.0 &100.0 &100.0 &100.0 &100.0 \\
\midrule
$\theta$&\multicolumn{12}{c}{\textit{Gumbel copula, parameter $\theta$}}\\
1.2 & 71.0 & 91.7 & 71.1 & 82.9 & 78.3 & 90.1
    & 93.4 & 99.9 & 94.2 & 99.7 & 94.6 & 99.9 \\
1.4 & 99.7 & 99.9 & 99.7 & 99.8 & 99.8 & 99.9
    &100.0 &100.0 &100.0 &100.0 &100.0 &100.0 \\
1.6 &100.0 &100.0 &100.0 &100.0 &100.0 &100.0
    &100.0 &100.0 &100.0 &100.0 &100.0 &100.0 \\
1.8 &100.0 &100.0 &100.0 &100.0 &100.0 &100.0
    &100.0 &100.0 &100.0 &100.0 &100.0 &100.0 \\
\midrule
$P$&\multicolumn{12}{c}{\textit{Mixture $(1-B_P)U_p+B_P\mathrm{B\&P}_p$, $B_P\sim\mathrm{Bernoulli}(P)$}}\\
0.2 & 14.2 &  6.7 &  6.1 &  7.2 &  7.6 &  9.6
    &  8.9 &  6.6 &  5.3 &  6.3 &  5.6 &  6.4 \\
0.4 & 60.8 & 13.8 & 15.2 & 16.7 & 21.3 & 22.6
    & 16.9 &  9.6 &  6.8 &  9.2 &  7.4 &  9.1 \\
0.6 & 97.9 & 35.8 & 66.8 & 44.3 & 80.8 & 64.7
    & 40.5 & 14.6 & 22.7 & 15.5 & 24.2 & 15.1 \\
0.8 &100.0 & 82.7 & 99.8 & 85.1 &100.0 & 98.7
    & 77.1 & 26.6 & 89.7 & 27.6 & 91.2 & 28.5 \\
1.0 &100.0 &100.0 &100.0 &100.0 &100.0 &100.0
    & 99.0 & 43.0 &100.0 & 41.3 &100.0 & 43.4 \\
\bottomrule
\end{tabular}}
\end{table}

\medskip

The $m$- and $s$-tests are highly competitive against TICM and Multivariance for
Normal, Clayton and Gumbel copula alternatives.
The most striking advantage appears for the mixture alternatives involving the
B\"{u}cher \& Pakzad example $B\&P_p$ 
which exhibits higher-order dependence without bivariate marginal dependence.
In this scenario, TICM and Multivariance lose sensitivity rapidly, whereas the
partial version with $\#H \geq 2$ of the $m$-test captures this structure with
power growing toward $100\%$ as the mixture parameter $P$ increases.
This result clearly illustrates the added value of decomposing the empirical
process into components indexed by coordinate subsets: higher-order dependence
is isolated in the terms $b_{n,H}$ with $\#H \geq 2$ without being diluted by
the marginal terms.

\subsection{A cross-cutting remark on dimension and sample size.}
In general, increasing the dimension from $p = 3$ to $p = 6$ penalises the
competitor tests more than the proposed ones, particularly for copula-type
alternatives.
This suggests that the architecture based on the Brownian sheet decomposition
scales more favourably with dimension than approaches relying on characteristic
functions or energy distances.

\section{Appendix. A proof of Theorem 1}\label{appe}

Theorem 1 is somewhat more general than the one in \cite{cabana2025}, as it refers to any product probability space and is formulated in terms of measures instead of their distribution functions. The following proof is adapted to this new formulation. Before proceeding to the proof, let us introduce some notation:

\begin{definition} A product $A=\prod_{j\in H}A_j$ of sets $A_j\in{\cal A}_j$ is a $K$-set when $A_j=\Omega_j$ for $j\not\in K$. The $K$-sets for $K\subset J, \#K\leq k$ shall be called $k$-sets. \end{definition}

\begin{definition} A signed measure $\mu$ on $\Omega_H$ is a $k$-null measure when $\mu(A)=0$ whenever $A$ is a $k$-set. In particular, the zm-measures on $H$ defined before are the $\#H-1$-null measures.\end{definition}

{\bf Proof of the uniqueness.} Let us assume that the decomposition (\ref{ladesco}) holds for a given measure $\mu$, and write the equivalent equations
\[ \mu_k\left(\prod_{j=1}^pA_j\right):=\mu\left(\prod_{j=1}^pA_j\right)-\sum_{H\subset J,\#H\leq k}P_{J\setminus H}\left(\prod_{j\in\{J\setminus H\} }A_j\right) \times\mu_H\left(\prod_{j\in H}A_j\right)\]
\bec\label{descogen} 
=\sum_{H\subset J,\#H > k}P_{J\setminus H}\left(\prod_{j\in\{J\setminus H\} }A_j\right) \times\mu_H\left(\prod_{j\in H}A_j\right) \quad \mbox{for } k=0,1,\dots
\eec 

By evaluating (\ref{ladesco}) at $\Omega$ we have
$\mu(\Omega)=\mu_{\emptyset}(\Omega)$ because for $\#H>0$, $\mu_H(\Omega_H)=0$ and this determines the signed measure $\mu_{\emptyset}$ on the sigma-algebra ${\cal A}_{\emptyset}=\{\emptyset,\Omega\}$.

If all the measures appearing in the central term of (\ref{descogen}) are known, the evaluation of (\ref{descogen}) on each $K$-set with $\#K=k+1$ leads to identify the measure $\mu_K$ because all the terms in the right-hand member but $P_{J\setminus K}\times\mu_K$ vanish. This is because
the zm-measures $\mu_H$ are evaluated at $\prod_{j \in K\cap H}A_j\times\prod_{j \in H\setminus K}\Omega_j$ and the set $H\setminus K$ is nonempty, wether $\#H>k+1$ or $\#H=k+1$ and $H\not=K$.

Therefore, for each $(k+1)$-set $A_K$, \bec\label{recu}\mu_K(A_K)=\mu_k(A_K).\eec

By applying recursively (\ref{descogen}) and (\ref{recu}) for $k=0,1,2,\dots$ all $\mu_H$ measures become uniquely determined.

\medskip

{\bf Proof of the existence.} The equations (\ref{descogen}),  (\ref{recu}) not only determine the measures $\mu_H$ but also provide a recursive construction of them. In order to establish the existence of the decomposition, it is required to verify that each $\mu_H$ constructed in that way is a zm-measure on $\Omega_H$.

Equation  (\ref{descogen}) shows that all $\mu_H$ with $\#H=k$ are zm-measures iff $\mu_k(A_k)=0$ for all $k$-sets $A_k$.

From (\ref{descogen}) and (\ref{recu}) we obtain the recurrence
\bec\label{larec}\mu_k\left(\prod_{j=1}^pA_j\right)\!=\!\mu_{k-1}\left(\prod_{j=1}^pA_j\right)\!-\!\!\sum_{\stackrel{H\subset J,}{\#H=k}}\!P_{J\setminus H}\left(\prod_{j\in J\setminus H}A_j\right) \!\times \mu_{k-1}\left(\prod_{j\in H}A_j\right),\eec
for $k=1,2,\dots,\;\mu_0=\mu-\mu(\Omega)$,
so that the existence is proved by verifying that the measures $\mu_k$ obtained by (\ref{larec}) 
 vanish on each $k$-set $A_K$.

This is true for $\mu_0$ that vanishes at $\Omega$ and if $\mu_{k-1}$ vanish for each $(k-1)$-set, then for $\#K=\#H=k, H\not=K$ and a $K$-set $A_K=\prod_{j\in K}A_j\times\prod_{j\not\in K}\Omega_j$ we have  $\mu_{k-1}(\prod_{j \in H\cap K}A_j\times\prod_{j\in H\setminus K}\Omega_j)=0$ because $H\setminus K\not=\emptyset$ and this implies that the argument of $\mu_{k-1}$ is a $(k-1)$-set. Therefore
\[\mu_k(A_K)=\mu_{k-1}(A_K)-P_{J\setminus K}(\Omega_{J\setminus K})\mu_{k-1}(A_K)=0\]
and in particular, $\mu_p=0$ ends the construction. This ends the finite induction proof of the existence.

The linearity and continuity follow plainly from the construction. \hfill\framebox{$ $}

\bibliographystyle{elsarticle-num}
\bibliography{UTOS}

\end{document}